\newtheorem{prop}{Proposition}
\numberwithin{prop}{section} 
\newtheorem{thm}[prop]{Theorem}
\newtheorem{cor}[prop]{Corollary}
\newtheorem{question}[prop]{Question}
\newtheorem{ddefn}[prop]{Definition}
\newtheorem{eex}[prop]{Example}
\newtheorem{eexs}[prop]{Examples}
\newtheorem{rrem}[prop]{Remark}
\newtheorem{eexercise}[prop]{Exercise}
\newtheorem{con}[prop]{Conjecture}
\newtheorem{hhome}[prop]{Homework}
\newtheorem{nnumber}[prop]{}
\newenvironment{defn}{\begin{ddefn}\rm}{\end{ddefn}}
\newenvironment{ex}{\begin{eex}\rm}{\end{eex}}
\newenvironment{conj}{\begin{con}\rm}{\end{con}}
\global\let\c@equation=\c@prop}
\newcommand\C{\mathbb C}
\newcommand\D{\mathbb D}
\renewcommand\H{\mathbb H}
\renewcommand\P{\mathbb P}
\newcommand\Q{\mathbb Q}
\newcommand\R{\mathbb R}
\newcommand\Z{\mathbb Z}
\newcommand\cM{\mathcal M}
\newcommand\Iff{if and only if }
\newcommand{\ra}{\longrightarrow}
\newcommand{\into}{\hookrightarrow}
\newcommand\coker{\operatorname{coker}}
\newcommand\sdim{\operatorname{sdim}}
\newcommand\Cl{C\ell} %Clifford algebra
\newcommand\fM{\mathfrak M}
\newcommand\p{\partial}
\newcommand\MSpin{\operatorname{MSpin}}
\newcommand\im{\operatorname{im}}
\newcommand{\id}{\operatorname{id}}
\def\wt{\widetilde}
\def\wh{\widehat}
\newcommand\RP{{\R\P}}
\newcommand\CP{{\C\P}}
\newcommand\HP{{\H\P}}
\newcommand\SU{{\operatorname{SU}}}
\newcommand\SO{{\operatorname{SO}}}
\newcommand\Spin{{\operatorname{Spin}}}
\newcommand\spin{{\operatorname{Spin}}}
\newcommand\PSp{{\operatorname{PSp}}}
\newcommand\Sp{{\operatorname{Sp}}}
\newcommand\MSO{{\operatorname{MSO}}}
\newcommand\BSpin{{\operatorname{BSpin}}}
\newcommand\HZ{{\rm{H}\Z}}
\newcommand\MG{{\operatorname{MG}}}
\newtheorem{cconstruction}[prop]{Construction}
\newenvironment{construction}{\begin{cconstruction}\rm}{\end{cconstruction}}
\newcommand\point{\operatorname{point}}
\newcommand\ko{{\operatorname{ko}}}
\newcommand\KO{{\operatorname{KO}}}
\newcommand\dist{{\operatorname{dist}}}
\newcommand\sP{{\mathscr P}}
\newcommand\vol{{\operatorname{vol}}}
\newcommand\psc{positive scalar curvature}
\newcommand\pscm{positive scalar curvature metric}
\def\nb-{\nobreakdash-}
\newcommand\bra[1]{\langle #1\rangle}
\title{Positive scalar curvature -- constructions and obstructions}
\author{Stephan Stolz}
\begin{document}
\maketitle

\abstract 
This is a survey of the current state of the question ``Which closed connected manifolds of dimension $n\ge 5$ admit Riemannian metrics whose scalar curvature function is everywhere positive?" The introduction gives a brief overview of these results, while the body of the paper discusses the methods used in the proofs of these results. We mention the two flavors of  topological obstructions to the existence of \pscm s: one is a consequence of the Weizenb\"ock formula for the Dirac operator, the other is obtained by considering stable minimal hypersurfaces. We talk about geometric constructions of \pscm s (the surgery/bordism theorem), which shows that the answer to the question above depends only the bordism class of the manifold in a suitable bordism group. 
Via the Pontryagin-Thom construction this can be translated into stable homotopy theory, and solved completely in some cases, in particular for simply connected manifolds, or manifolds with very special fundamental groups. The last section discusses some open questions. 

\tableofcontents

\section{Introduction}

Among the various flavors of curvature of a Riemannian manifold $M$, the scalar curvature is the most basic: it is a smooth function $s\colon M\to\R$ whose value at a point $x\in M$ controls the volume of small balls with center $x$ in the sense that $s(x)$ shows up in the first interesting coefficient of the Taylor  series expansion
\[
\frac{\vol\  B_r(x,M)}{\vol\  B_r(0,\R^n)}
=1-\frac{s(x)}{6(n+2)}r^2+\dots
\]
Here  $\vol\  B_r(x,M)$ is the volume of the geodesic ball of radius $r$ around $x$ in $M$, and $\vol\  B_r(0,\R^n)$ is the volume of the ball of the same radius in $\R^n$ for  $n=\dim M$.

\begin{nnumber} \label{quest:main}
{\bf Question.} Let $M$ be a closed connected manifold. Does $M$ carry a \pscm, i.e., a Riemannian metric such that $s(x)>0$ for all $x\in M$?
\end{nnumber}

This paper does not contain new results concerning this question, but rather its goal is to give a brief overview of known results addressing this question in the introduction and to  outline the methods used in their proof in the body of the paper. In the last section we mention some open problems. We have attempted to give precise references for readers curious about details. 

We will restrict our discussion to manifolds of dimension $n\ge 5$. The reason is that the flavor of the answers and the tools used very much depend on $n$, like in the classification of manifolds up to diffeomorphism. The crucial tool for the diffeomorphism classification 
of manifolds of dimension $n\ge 5$ is the $s$\nb-cobordism theorem, according to which two such manifolds which are $s$\nb-cobordant are in fact diffeomorphic. The analogous tool for studying Question \ref{quest:main} is Theorem \ref{thm:bordism} according to which a cobordism between manifolds $M$ and $N$  of dimension $\ge 5$, equipped with suitable additional structures, implies that if $N$ carries a  \pscm, then so does $M$. Also, there are diffeomorphism invariants, like the Seiberg-Witten invariant, which are only defined  for manifolds of dimension $n=4$. The Seiberg-Witten invariant is also relevant for understanding \pscm s on $4$\nb-manifolds, since it vanishes for any closed $4$\nb-manifold that carries  a \pscm.

The reader might wonder about the implicit bias in question \ref{quest:main} by asking about {\em positive} scalar curvature metrics. The reason is that if $s\colon M\to \R$ is a smooth function on any closed manifold of dimension $n\ge 3$ with $s(x)<0$ at some point $x\in M$, then there exists  a Riemannian metric on $M$ with scalar curvature function $s$ \cite[Thm.\ 6.4(b)]{KW1975}. Moreover, if a closed manifold $M$ of dimension $n\ge 3$ carries a \pscm, then {\em any} smooth function $s\colon M\to \R$ is realized as the scalar curvature function of some metric \cite[Thm.\ 6.4(a)]{KW1975}. So the situation in dimension $n\ge 3$ is radically different than in dimension $2$, where having everywhere positive {\em and} everywhere negative functions as the scalar curvature on the same $2$\nb-manifold is ruled out by the Gauss-Bonnet Theorem.

\medskip

Lichnerowicz observed that the Dirac operator on a closed Riemannian spin manifold $M$ with \psc\ is invertible, and hence its index vanishes. By the Atiyah-Singer Index Theorem, the index of the Dirac operator is equal to a topological invariant $\wh A(M)$ (known as Hirzebruch's $\wh A$\nb-genus), and hence $\wh A(M)$ vanishes for if $M$ carries a \pscm\ \cite{Li1963}. Including a later refinement by Hitchin \cite{Hi1974}leads to the following result. 

\begin{thm}{\bf (Lichnerowicz, Hitchin)}\label{thm:LH}
Let $M$ be a closed Riemannian spin manifold of dimension $n$ with \pscm, then its Atiyah invariant $\alpha(M)\in \KO_n$ vanishes. 
\end{thm}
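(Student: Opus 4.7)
The plan is to generalize the classical Lichnerowicz vanishing argument, replacing the complex Dirac operator (whose index takes values only in $\Z$, and only in dimensions divisible by $4$) by Hitchin's $\Cl_n$-linear real Dirac operator, whose index takes values in $\KO_n$ and equals $\alpha(M)$. First I would fix a spin structure on $M$ and form the real spinor bundle $S\to M$ associated to the spin principal bundle via the natural left action of $\Spin(n)\subset \Cl_n$ on $\Cl_n$ itself. This $S$ carries a commuting right $\Cl_n$\nb-action, and the Levi-Civita connection induces a $\Cl_n$\nb-linear metric connection $\nabla$ on $S$. Composing $\nabla$ with Clifford multiplication yields a formally self-adjoint, $\Cl_n$\nb-linear Dirac operator $D\colon\Gamma(S)\to\Gamma(S)$.

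The next step is the Weitzenb\"ock identity
$$D^2=\nabla^*\nabla+\tfrac{s}{4}\,\id_S,$$
obtained by a pointwise computation that collects the curvature terms arising from commuting two covariant derivatives through Clifford multiplication; the off-diagonal curvature terms cancel by the first Bianchi identity and the scalar curvature remainder is all that survives. Integrating against a spinor $\psi\in\Gamma(S)$ and using that $M$ is closed gives
$$\int_M|D\psi|^2\,d\vol=\int_M|\nabla\psi|^2\,d\vol+\tfrac14\int_M s\,|\psi|^2\,d\vol.$$
Since $s>0$ everywhere and $M$ is compact, $s$ is bounded below by a positive constant, so the right-hand side is strictly positive for every $\psi\not\equiv 0$. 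Hence $\ker D=0$, and by self-adjointness $D$ is invertible.

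Finally, I would identify the analytic index of the $\Cl_n$\nb-linear operator $D$ with $\alpha(M)$. Following Atiyah-Bott-Shapiro, the kernel of a $\Cl_n$\nb-linear elliptic operator defines a class in the cokernel of the restriction map $\mathfrak M_{n+1}\to\mathfrak M_n$ from $\Z/2$-graded $\Cl_{n+1}$-modules to $\Cl_n$-modules, and this cokernel is precisely $\KO_n$; this class is by definition $\alpha(M)$ (or can be identified with it via the Atiyah-Singer index theorem for Clifford-linear operators). Invertibility of $D$ forces $\ker D=0$ as a graded $\Cl_n$-module, so $\alpha(M)=0$.

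The main obstacle is not the Weitzenb\"ock step, which is essentially formal once the right bundle is in hand, but rather Hitchin's refinement: setting up the $\Cl_n$\nb-linear Dirac operator correctly, verifying that its symbol is $\Cl_n$-linearly elliptic, and matching its analytic index with the topologically defined invariant $\alpha(M)\in\KO_n$ via the Atiyah-Bott-Shapiro isomorphism. Once this framework is established, the positivity of $s$ feeds into the Weitzenb\"ock formula with no modification, and the vanishing of $\alpha(M)$ is immediate.
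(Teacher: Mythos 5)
Your proposal follows essentially the same route as the paper: form the $\Cl_n$\nb-linear spinor bundle $S_{\Cl_n}=\Spin(M)\times_{\Spin(n)}\Cl_n$, apply the Lichnerowicz formula $D^2=\nabla^*\nabla+\tfrac14 s$ to conclude $\ker D^M_{\Cl_n}=0$ when $s>0$, and identify $\alpha(M)$ with the class of $\ker D^M_{\Cl_n}$ in $\fM(\Cl_n)/i^*\fM(\Cl_{n+1})\cong\KO_n$ via Atiyah--Bott--Shapiro. This is exactly the argument of sections \ref{ssec:Dirac} and \ref{ssec:Cliff_Dirac}, so the proposal is correct and matches the paper's proof.
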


Here $\KO_*(\ )$ is the generalized homology theory known as {\em real $K$\nb-homology}, and $\KO_n=\KO_n(\point)$ is the $n$\nb-th real $K$\nb-homology group of a point. The invariant $\alpha(M)$ has a topological as well as an index theory definition.
 Topologically, the spin structure on $M$ determines a fundamental class $[M]^{\spin}\in \KO_n(M)$ and $\alpha(M)=p_*[M]^{\spin}$, where $p_*\colon \KO_n(M)\to \KO_n(\point)$ is the homomorphism induced by the projection map $p\colon M\to \point$. From an index theory point of view, the invariant $\alpha(M)$ can be viewed as a ``refined index'' (with values in $\KO_n$ rather than $\Z$) of the {\em Clifford linear Dirac operator} on $M$, as explained in section \ref{ssec:Cliff_Dirac}, see equation \eqref{eq:alpha}.

For $n\equiv 0\mod 4$, the invariant $\alpha(M)\in \KO_n\cong \Z$ is equal to  $c\wh A(M)$ where $c=1$ for  $n\equiv 0\mod 8$ and $c=\frac 12$ for $n\equiv 4\mod 8$. It is interesting to note that for $n\equiv 1,2\mod 8$ and $n\ge 9$ there is an $n$\nb-manifold $\Sigma$ with $0\ne \alpha(\Sigma)\in \KO_n\cong \Z/2$ which is homeomorphic, but not diffeomorphic to the sphere $S^n$. In particular, $S^n$ carries a \pscm\ while the homeomorphic manifold $\Sigma$ does not! This shows that Question \ref{quest:main} is subtle: the answer in general depends on the smooth structure of $M$. 

There is a generalization of the Lichnerowicz-Hitchin Theorem \ref{thm:LH} due to Rosenberg based on an index invariant $\alpha(M,f)\in \KO_n(C^*\pi)$ associated to a closed spin $n$\nb-manifold $M$ equipped with a map $f\colon M\to B\pi$ to the classifying space $B\pi$ of a discrete group $\pi$ \cite[Proof of Thm.\ 3.4]{Ro1986III}. Here $\KO_n(C^*\pi)$  is the real $K$\nb-theory of the reduced group $C^*$\nb-algebra $C^*\pi$, which is a completion of the real group ring $\R\pi$. If $\pi$ is the trivial group, this index invariant agrees with $\alpha(M)$.

\begin{thm}{\bf (Rosenberg)}\label{thm:Ro}
If $M$ carries a \pscm, then $\alpha(M,f)$ vanishes.
\end{thm}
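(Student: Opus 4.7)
The plan is to generalize the Lichnerowicz argument by twisting the Clifford-linear Dirac operator of $M$ by the Mishchenko bundle associated to the classifying map $f$. The goal is to produce a $C^*\pi$-linear Dirac-type operator whose $K$-theoretic index is $\alpha(M,f)$ and whose invertibility follows from positivity of scalar curvature.

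First, let $\wt M \to M$ be the principal $\pi$-bundle $f^*E\pi$; it is the regular $\pi$-covering of $M$ classified by $f$. Form the Mishchenko bundle
\[
\mathcal L_\pi := \wt M \times_\pi C^*\pi,
\]
a locally trivial bundle over $M$ whose fibers are finitely generated projective right Hilbert $C^*\pi$-modules. Since its transition functions are locally constant, $\mathcal L_\pi$ carries a canonical flat connection. Next, I would tensor $\mathcal L_\pi$ with the Clifford-linear spinor bundle $S$ of Section \ref{ssec:Cliff_Dirac} and use the product connection to define an elliptic order-one operator $D_\pi$ on $S \otimes_\R \mathcal L_\pi$ which is left Clifford-linear and right $C^*\pi$-linear; by definition, its Kasparov-theoretic index is $\alpha(M,f) \in \KO_n(C^*\pi)$, extending the formula \eqref{eq:alpha}.

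The crucial computation is the Weitzenb\"ock formula for $D_\pi$. Because the twisting connection on $\mathcal L_\pi$ is flat, no twisting curvature term appears, and the identity
\[
D_\pi^2 = \nabla^* \nabla + \tfrac{s}{4}
\]
holds for the same reason as in the untwisted case. Since $M$ is closed and $s>0$, there is a constant $s_0 > 0$ with $s \ge s_0$, so $D_\pi^2 \ge s_0/4$ as a positive self-adjoint regular operator on the Hilbert $C^*\pi$-module of $L^2$-sections; in particular $D_\pi$ is invertible with bounded inverse.

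Finally, I would invoke the general principle that an invertible self-adjoint Clifford-linear regular operator on a Hilbert $C^*\pi$-module represents the zero class in $\KO_n(C^*\pi)$: its bounded transform can be connected through invertibles to $\pm 1$, trivializing the corresponding Kasparov cycle. The step I expect to require most care is precisely this last one, together with the identification of the analytic index of $D_\pi$ with the topologically defined $\alpha(M,f)$; in the absence of an ordinary Fredholm index, the usual \emph{kernel-minus-cokernel} picture has to be replaced by the Kasparov-theoretic framework, and one has to check that strict positivity of $D_\pi^2$ kills the full $K$-theory class rather than just a kernel.
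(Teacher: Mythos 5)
Your proposal is correct and is essentially the argument the paper sketches: twist the Clifford-linear Dirac operator by the flat Mishchenko bundle $\wt M\times_\pi C^*\pi$, observe that flatness leaves the Lichnerowicz formula uncorrected so that positive scalar curvature forces invertibility, and conclude that the Mishchenko--Fomenko/Kasparov index $\alpha(M,f)\in \KO_n(C^*\pi)$ vanishes. The two points you flag as delicate (that $\alpha(M,f)$ is by definition this analytic index, and that an invertible self-adjoint Clifford-linear operator yields a cycle homotopic to a degenerate one) are exactly where the standard references place the technical work, and your treatment of them is sound.
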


\medskip

After discussing {\bf obstructions} to \pscm s, we now turn to {\bf constructions} of manifolds which carry such metrics.  A major breakthrough was obtained independently by Gromov-Lawson and Schoen-Yau who proved that if a closed manifold $M$ is obtained by a surgery of codimension $\ge 3$ from a manifold $N$ which carries a \pscm, than also $M$ carries  a \pscm\ \cite[Thm.\ A]{GL1980}, \cite[Cor.\ 6]{SY1979a}. Gromov and Lawson made the crucial observation that this implies that the answer to Question \ref{quest:main} for a simply connected manifold $M$ of dimension $n\ge 5$ depends only on its bordism class in a suitable bordism group \cite[Thm.\ B, Thm.\ C]{GL1980}. This was later extended by Rosenberg to certain manifolds with non-trivial fundamental group \cite[Thm.\ 2.2, Thm.\ 2.13]{Ro1986II}. 

Let $\Omega_n^\SO$ resp.\ $\Omega_n^\spin$ be the bordism group of $n$\nb-dimensional closed manifolds equipped with an orientation resp.\ spin structure. More generally, for a topological space $X$, and $G=\SO$ or $G=\spin$, let $\Omega_n^G(X)$ be the bordism classes of pairs $(M,f)$ where $M$ is an oriented resp.\ spin $n$\nb-manifold and $f\colon M\to X$ is a map. Let $\Omega_n^{G,+}(X)$ be the subgroup of $\Omega_n^G(X)$ consisting of those bordism classes that can be represented by pairs $(M,f)$ such that $M$ carries a \pscm.

\begin{thm}{\bf (Gromov and Lawson, Rosenberg).}\label{thm:bordism}
 Let $M$ be a closed connected $n$\nb-manifold, $n\ge 5$, with fundamental group $\pi$, and let  $u\colon M\to B\pi$  be the classifying map of the universal covering $\wt M\to M$. 
\begin{enumerate}[label=\normalfont(\roman*),topsep=2pt,itemsep=-2pt]
\item If $M$ is spin, then it carries a \pscm\ \Iff  $[M,u]\in \Omega_n^{\spin,+}(B\pi)$.
\item If $M$ is oriented and $\wt M$ is non-spin, then $M$ admits a \pscm\ \Iff $[M,u]\in \Omega_n^{\SO,+}(B\pi)$. 
\end{enumerate}
\end{thm}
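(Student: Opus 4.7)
The implication $(\Rightarrow)$ in both parts is immediate from the definition of $\Omega_n^{G,+}(B\pi)$: if $M$ admits a \pscm, then $(M,u)$ itself represents its own bordism class and the underlying manifold has \psc, so $[M,u]$ lies in the distinguished subgroup.

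For the $(\Leftarrow)$ direction of (i), suppose $[M,u]=[N,f]$ in $\Omega_n^{\spin}(B\pi)$ with $N$ carrying a \pscm, and fix a witnessing spin bordism $(W^{n+1},F\colon W\to B\pi)$. The plan is to realize the passage from $N$ to $M$ as a finite sequence of surgeries of codimension $\ge 3$ on successive intermediate $n$-manifolds, and then invoke the Gromov--Lawson/Schoen--Yau surgery theorem once per surgery to propagate the \pscm\ from $N$ all the way up to $M$. For this to succeed, I need $W$ to admit a handle decomposition on $N\times I$ in which every handle has index $\le n-2$, since a $k$-handle attached along an embedded $S^{k-1}\subset N$ is precisely the trace of a surgery of codimension $n-k+1$.

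To engineer such a handle decomposition I would first perform surgery in the interior of $W$ on embedded $1$-spheres representing $\ker(F_*\colon\pi_1(W)\to\pi)$ until $F$ is a $\pi_1$-isomorphism, then further surger on embedded $2$-spheres to make the inclusion $M\hookrightarrow W$ a $2$-equivalence, i.e., to force $(W,M)$ to be $2$-connected. Both rounds of surgery are available because $\dim W=n+1\ge 6$, they preserve the spin structure on $W$, and $F$ extends across each trace (the attaching $1$-sphere lies in $\ker F_*$ and the attaching $2$-sphere is null-homotopic in $B\pi$ since $\pi_2(B\pi)=0$). After this preparation, Smale's handle calculus, valid in $\dim W\ge 6$, produces a handle decomposition of $W$ on $N\times I$ with no handles of index $0$ or $1$ (from connectedness of $W$ and $\pi_1$-isomorphism at the $N$-end), no handles of index $n$ or $n+1$ (dually, from the $M$-end), and no handles of index $n-1$ (equivalently no $2$-handles on the $M$-side, ruled out by the $2$-connectedness of $(W,M)$ via the Whitney trick). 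The surviving handles have index in $[2,n-2]$, each one is the trace of a codimension-$\ge 3$ surgery, and starting from the given \pscm\ on $N$ and applying the surgery theorem stepwise delivers a \pscm\ on $M$.

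The chief technical obstacle is the elimination of the index-$(n-1)$ handles on the $N$-side, equivalently the index-$2$ handles on the $M$-side: the connectivity tricks for the extreme indices are standard Smale-theoretic manipulations, but removing the middle-range index $n-1$ requires genuine $\pi_2$-control coming from the classifying map $u$, applied through the Whitney trick in an $(n+1)$-manifold with $n+1\ge 6$. This is precisely where the dimension hypothesis $n\ge 5$ becomes tight and why the classifying map $u\colon M\to B\pi$ is part of the data rather than an afterthought. Part (ii) is proved by repeating exactly the same scheme in the oriented category: the Gromov--Lawson/Schoen--Yau surgery theorem is insensitive to whether the stable tangential structure is $\spin$ or merely $\SO$, and all handle manipulations can be carried out preserving orientation. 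The hypothesis that $\wt M$ is non-spin simply ensures that $[M,u]$ naturally lives in $\Omega_n^{\SO}(B\pi)$ rather than a finer spin bordism group.
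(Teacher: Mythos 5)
Your treatment of part (i) is essentially the paper's proof: reduce to a bordism $W$, surger its interior so that $F$ becomes a $\pi_1$\nb-isomorphism and $\pi_2(W)=0$, conclude that $i^M\colon M\into W$ is a $2$\nb-equivalence, and then trade away the handles that would correspond to surgeries of codimension $\le 2$ when passing from $N$ to $M$. In the spin case the $\pi_2$\nb-surgeries are legitimate because an embedded $2$\nb-sphere in a spin $(n+1)$\nb-manifold has stably trivial, hence trivial, normal bundle. That part is fine.

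Part (ii), however, has a genuine gap. You assert that the oriented case is "exactly the same scheme" and that the hypothesis that $\wt M$ is non-spin "simply ensures that $[M,u]$ naturally lives in $\Omega_n^{\SO}(B\pi)$." Neither is right. In the oriented category you cannot do surgery on an arbitrary class in $\pi_2(W)$: the normal bundle of an embedded $2$\nb-sphere is trivial \Iff $w_2(TW)$ vanishes on it, so you can only kill the kernel of $c^{TW}_*\colon \pi_2(W)\to \pi_2(B\SO)=\Z/2$, not all of $\pi_2(W)$. The paper's fix is to ask only that $F\times c^{TW}\colon W\to B\pi\times B\SO$ be a $3$\nb-equivalence (achievable by surgering $\ker c^{TW}_*$), and then to use the hypothesis that $\wt M$ is non-spin to guarantee that $c^{TM}_*\colon\pi_2(M)\to\Z/2$ is onto, so that $u\times c^{TM}$ is a $2$\nb-equivalence and hence $i^M$ is still a $2$\nb-equivalence. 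So the non-spin condition on the universal cover is a load-bearing ingredient of the surgery program, not bookkeeping about which bordism group the class lives in; indeed the statement of (ii) is false without it (an exotic $9$\nb-sphere with $\alpha\ne 0$ is null-bordant in $\Omega_9^{\SO}$, hence oriented-bordant to the round $S^9$, yet carries no \pscm). Since your argument for (ii) never uses that hypothesis, it cannot be complete as written; you need to route the $\pi_2$\nb-control through $B\pi\times B\SO$ as above.
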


More generally, for {\em any} closed manifold $M$ with fundamental group $\pi$, the pair $(M,u)$ represents an element in a {\em twisted} version of the bordism group $\Omega_n^\spin(B\pi)$ (if $\wt M$ is spin) resp.\ $\Omega_n^\SO(B\pi)$ (if $\wt M$ is non-spin). There are generalizations of the theorem above \cite[Thm.\ 3.3 and Examples 3.6 \& 3.7]{RS1994}.

The oriented bordism ring $\Omega_*^\SO$ was determined by Wall \cite{Wa1960}, who also provided a list of explicit oriented manifolds whose bordism classes are  multiplicative generators for $\Omega_*^\SO$. Gromov and Lawson showed that all of these manifolds admit \pscm s, which by the theorem above implies \cite[Cor.\ C]{GL1980}:

\begin{cor}{\bf (Gromov and Lawson).}\label{cor:GL} Every closed simply connected non-spin manifold of dimension $n\ge 5$ admits a \pscm.
\end{cor}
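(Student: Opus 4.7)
The plan is to invoke Theorem \ref{thm:bordism}(ii) and reduce the statement to a computation in the oriented bordism ring. Since $M$ is simply connected, $\pi = \pi_1(M)$ is trivial, so $B\pi$ is contractible and $\Omega_n^\SO(B\pi) = \Omega_n^\SO$; likewise the classifying map $u$ carries no information and $\Omega_n^{\SO,+}(B\pi) = \Omega_n^{\SO,+}$. Moreover $\wt M = M$ is non-spin by hypothesis, so part (ii) of the theorem applies and it suffices to prove that $[M] \in \Omega_n^{\SO,+}$. In other words, we need only exhibit, within the oriented bordism class of $M$, some representative that admits a \pscm.

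The key structural observation is that $\Omega_*^{\SO,+} \subset \Omega_*^\SO$ is in fact an \emph{ideal}. It is closed under addition because disjoint union of psc manifolds is psc; it is closed under multiplication by arbitrary oriented classes because the product metric $g_1/\varepsilon^2 + g_2$ on $N_1 \times N_2$ has scalar curvature $\varepsilon^2 s_1 + s_2$ after rescaling, so if $g_1$ is psc then for $\varepsilon$ small enough the product is psc regardless of $g_2$. Consequently it is enough to check that some collection of \emph{ring-theoretic generators} of $\Omega_*^\SO$ lies in $\Omega_*^{\SO,+}$: once the generators are in the ideal, so are all their products and all sums thereof, which by Wall's theorem \cite{Wa1960} is all of $\Omega_*^\SO$.

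At this point I would appeal to Wall's explicit list of generators of $\Omega_*^\SO$ and verify that each of them admits a \pscm. The rational part is easy: the even complex projective spaces $\CP^{2k}$ are generators modulo torsion, and the Fubini-Study metric has positive sectional curvature, hence positive scalar curvature. The harder (and historically interesting) part is the $2$\nb-torsion: Wall's generators in those degrees are certain explicit manifolds whose psc metrics have to be produced by hand. This case-by-case verification is what Gromov and Lawson carry out in \cite[Cor.\ C]{GL1980} and is the main obstacle of the proof; once it is in place, the ideal $\Omega_*^{\SO,+}$ contains a generating set and therefore equals $\Omega_*^\SO$.

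Combining the three steps: $[M] \in \Omega_n^\SO = \Omega_n^{\SO,+}$, so $M$ is oriented-bordant to a closed manifold carrying a \pscm, and since $n \ge 5$ and $\wt M = M$ is non-spin, Theorem \ref{thm:bordism}(ii) lifts this bordism-level statement to the conclusion that $M$ itself carries a \pscm. The only place where the hypotheses ``non-spin'' and ``$n \ge 5$'' are used are precisely in invoking part (ii) of Theorem \ref{thm:bordism}; the rest of the argument is purely ring-theoretic plus Gromov-Lawson's explicit constructions on Wall's generators.
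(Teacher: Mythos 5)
Your proposal is correct and follows essentially the same route as the paper: reduce via the Bordism Theorem to showing $\Omega_n^{\SO,+}=\Omega_n^{\SO}$, note that $\Omega_*^{\SO,+}$ is an ideal (Observation \ref{ob:fib}(1)), and check Wall's multiplicative generators. The only difference is cosmetic: where you defer the torsion generators to a ``case-by-case verification by hand,'' the paper records the uniform reason, namely that all of Wall's generators are total spaces of $\CP^k$\nb-bundles with isometric structure group and hence carry \pscm s by Observation \ref{ob:fib}(2).
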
 

For a closed spin $n$\nb-manifold $M$ the vanishing of the index obstruction $\alpha(M)\in \KO_n$ is a necessary condition for the existence of a \pscm\ on $M$ by the Lichnerowicz-Hitchin theorem \ref{thm:LH}.
Gromov and Lawson conjectured  that it is also a sufficient condition for simply connected manifolds of dimension $\ge 5$ \cite[p.\ 424]{GL1980}. This  was later proved by the author \cite[Thm.\ A]{St1992}:

\begin{thm}{\bf (Stolz).}\label{thm:stolz} Let $M$ be a simply connected closed spin manifold of dimension $n\ge 5$. Then $M$ admits a \pscm\ \Iff\ $\alpha(M)=0$.
\end{thm}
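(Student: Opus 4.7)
The ``only if'' direction is precisely the Lichnerowicz--Hitchin Theorem \ref{thm:LH}, so I focus on the ``if'' direction. The plan is to invoke Theorem \ref{thm:bordism}(i) with $\pi$ trivial: this reduces the task to showing that every class in $\ker(\alpha : \Omega_n^{\spin} \to \KO_n)$ is represented by a closed spin $n$-manifold carrying a \pscm. Writing $\Omega_n^{\spin,+} \subseteq \Omega_n^{\spin}$ for the subgroup of classes realized by \pscm-manifolds, Theorem \ref{thm:LH} gives $\Omega_n^{\spin,+} \subseteq \ker\alpha$, and the goal becomes the reverse inclusion for $n \ge 5$.

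The geometric input is a family of total spaces of $\HP^2$-bundles. The standard symmetric Riemannian metric on $\HP^2$ has positive scalar curvature and is invariant under $\PSp(3)$, so if $\HP^2 \to E \to B$ is a fiber bundle with structure group $\PSp(3)$ over a closed manifold $B$, then $E$ carries a \pscm: equip $E$ with a connection metric built from the $\PSp(3)$-invariant metric on the fibers and any metric on $B$, and rescale the horizontal part sufficiently small. When $B$ is spin, $E$ inherits a spin structure. Let $T_n \subseteq \Omega_n^{\spin}$ be the subgroup generated by bordism classes of all such total spaces, as $B$ ranges over closed spin manifolds. Then $T_n \subseteq \Omega_n^{\spin,+}$ and, because each such $E$ already carries a \pscm, Theorem \ref{thm:LH} yields $T_n \subseteq \ker\alpha$.

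The crux of the argument is then the homotopy-theoretic identity $T_n = \ker\alpha$ for $n \ge 5$. This is a question purely about the spin bordism spectrum $\MSpin$ and the Atiyah--Bott--Shapiro transformation $\alpha : \MSpin \to \KO$, which factors through connective real $K$-theory $\ko$. After localizing at the prime $2$, one uses the Anderson--Brown--Peterson splitting of $\MSpin$ as a wedge of suspensions of $\ko$, $\ko\bra{2}$, and mod-$2$ Eilenberg--MacLane spectra, in which $\alpha$ is identified with projection onto a distinguished summand. The plan is to interpret the $\HP^2$-bundle construction as an $\MSpin$-module map from a suitable Thom spectrum over $B\PSp(3)$ into $\MSpin$, and then to verify --- by explicit characteristic-number computations on $B\PSp(3)$ --- that its image hits every wedge summand that $\alpha$ kills. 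Away from $2$, the map $\MSpin \to \MSO$ is an equivalence, and the corresponding odd-primary kernel is generated by classes of manifolds already shown to admit \pscm s in Gromov and Lawson's analysis of $\Omega_*^{\SO}$ (cf.\ Corollary \ref{cor:GL}).

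Once the identity $T_n = \ker\alpha$ is in place, any closed spin $n$-manifold $M$ with $n \ge 5$ and $\alpha(M) = 0$ is spin-bordant to a disjoint union (with signs) of total spaces of $\HP^2$-bundles, each of which admits a \pscm. Theorem \ref{thm:bordism}(i) then promotes this to a \pscm\ on $M$ itself, completing the argument. The main obstacle is the spectrum-level computation: translating the vanishing of $\alpha$ into geometrically realized classes in $T_n$ requires careful bookkeeping with the ABP decomposition and with characteristic numbers of the universal $\HP^2$-bundle over $B\PSp(3)$, while the fibered-metric construction and the bordism reduction are comparatively routine once that identification has been made.
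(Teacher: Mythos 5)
Your architecture is exactly the paper's: reduce via Theorem \ref{thm:bordism}(i) to showing $\ker\alpha\subseteq\Omega_n^{\spin,+}$, realize the kernel by total spaces of $\HP^2$\nb-bundles with structure group $\PSp(3)$ (this is Theorem \ref{thm:transfer}), and verify the resulting bordism identity separately localized at $2$ and with $2$ inverted. However, your odd-primary step has a genuine gap. You assert that the kernel of $\alpha$ with $2$ inverted is generated by manifolds ``already shown to admit \pscm s in Gromov and Lawson's analysis of $\Omega_*^\SO$.'' Wall's generators of $\Omega_*^\SO$ are not spin manifolds (the degree-$4$ generator is $\CP^2$, and the higher ones are $\CP^{2k}$\nb-bundles and Milnor manifolds), and the isomorphism $\Omega_*^\spin\otimes\Z[\frac12]\cong\Omega_*^\SO\otimes\Z[\frac12]$ identifies bordism classes, not the subgroups represented by manifolds with \pscm: knowing an \emph{oriented} representative of a class carries a \pscm\ gives no spin representative with one. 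What is actually needed --- supplied by the characteristic-number computation of \cite[Prop.\ 4.2]{KrSt1993} --- is a system of polynomial generators $[K],[M^8],[M^{12}],\dots$ of $\Omega_*^\spin\otimes\Z[\frac12]$ in which every generator of degree $\ge 8$ is itself a spin $\HP^2$\nb-bundle (see \eqref{eq:family}); then $\ker\alpha$ with $2$ inverted is the ideal these generate, and it lies in the image of the $\HP^2$\nb-bundle transfer because that image is an ideal. This input is nontrivial and cannot be replaced by Corollary \ref{cor:GL}.

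At the prime $2$, your plan of ``hitting every ABP wedge summand that $\alpha$ kills'' also understates the key difficulty. The paper factors the transfer $T$ through the homotopy fiber $\wh\MSpin$ of $U^\spin\colon\MSpin\to\ko$ (using that $U^\spin\circ T$ is null-homotopic, a Family Index Theorem argument) and proves that $\wh T$ is surjective on $2$\nb-local homotopy via the mod $2$ Adams spectral sequence. A split surjection on $\Z/2$\nb-homology, i.e.\ on the $E_2$\nb-pages --- which is roughly what your characteristic-number check on $B\PSp(3)$ would deliver --- does \emph{not} by itself give surjectivity on homotopy: one must additionally rule out differentials in the source spectral sequence. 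That is \cite[Prop.\ 1.5]{St1992} and is a substantive theorem, not bookkeeping. With these two repairs your proposal coincides with the paper's proof.
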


The spin bordism groups were calculated by Anderson, Brown and Peterson \cite{ABP1967}.  However, unlike for the oriented bordism ring, there is no list of explicit spin manifolds which multiplicatively generate $\Omega_*^\spin$ (or the ideal consisting of the bordism classes of spin manifolds with vanishing $\alpha$\nb-invariant). In fact, to the author's knowledge, the multiplicative structure of $\Omega_*^\spin$ has not been completely determined. 

The theorem above is a corollary of a statement about spin bordism, according to which every closed spin manifold $M$ with $\alpha(M)=0$ is spin bordant to the total space of a fiber bundle, whose  fiber is the quaternionic projective plane $\HP^2$, and whose structure group is the isometry group of the standard metric on $\HP^2$ (see Theorem \ref{thm:transfer}). This implies Theorem \ref{thm:stolz}, since total spaces of such fiber bundles carry \pscm s by Observation \ref{ob:fib}(2).

The bordism statement is proved using the Pontryagin-Thom construction to express bordism groups as homotopy groups of suitable Thom spectra, and using the Adams spectral sequence to calculate with these homotopy groups. 

\medskip

To deal with manifolds with non-trivial fundamental group, we recall that 
by Theorem \ref{thm:bordism} the classification of manifolds which carry \pscm s boils down to the computation of the subgroups $\Omega_n^{G,+}(B\pi)\subset \Omega_n^{G}(B\pi)$, for $G=\SO,\spin$ (and twisted versions thereof). This is difficult, since $\Omega^G_*$, the coefficient ring of the generalized homology theory $\Omega^G_*(\ )$ is large (rationally $\Omega^G_*$ is a polynomial ring generated by elements $x_4,x_8,\dots$, where the subscript indicates the degree of the generators). So it is desirable to replace the homology theories $\Omega^G_*(\ )$, $G=\spin,\SO$ by theories with a smaller coefficient rings, which the following result accomplishes.

Given a closed $n$\nb-manifold $M$, an orientation on $M$ determines a homology fundamental class $[M]^\SO\in H_n(M)$. Similarly, a spin structure on $M$ determines a $\ko$\nb-theory fundamental class $[M]^\Spin\in \ko_n(M)$, where $\ko_*(\ )$ is a generalized homology theory known as {\em connective real $K$\nb-homology}. For a topological space $X$, let $H_n^+(X)\subset H_n(X)$ be the subgroup consisting of homology classes which can written in the form $f_*[M]^\SO$, where $M$ is a closed oriented $n$\nb-manifold which carries a \pscm\ and $f\colon M\to X$ is a map. The subgroup $\ko_n^+(X)\subset \ko_n(X)$ is defined analogously, requiring a spin structure instead of an orientation on $M$ and replacing $f_*[M]^\SO$ by $f_*[M]^\spin$. 

\begin{thm}\label{thm:ko}
 Let $M$ be a closed connected $n$\nb-manifold, $n\ge 5$, with fundamental group $\pi$, and let  $u\colon M\to B\pi$  be the classifying map of the universal covering $\wt M\to M$. 
 \begin{enumerate}[label=\normalfont(\roman*),topsep=2pt,itemsep=-2pt]
\item If $M$ is spin then it carries a \pscm\ \Iff $u_*[M]^\Spin\in \ko_n^+(B\pi)$.
\item If $M$ is oriented, but $\wt M$ does not admit a spin structure, then $M$ carries a \pscm\ \Iff $u_*[M]^\SO\in H_n^+(B\pi)$. 
\end{enumerate}
\end{thm}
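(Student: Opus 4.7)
The easy direction of both statements is immediate: if $M$ carries a $\pscm$, then the pair $(M,u)$ itself witnesses that $u_*[M]^\Spin\in\ko_n^+(B\pi)$ in case (i), and that $u_*[M]^\SO\in H_n^+(B\pi)$ in case (ii), directly from the definitions of these subgroups.

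For the nontrivial direction of (i), Theorem \ref{thm:bordism}(i) reduces the problem to showing $[M,u]\in\Omega_n^{\spin,+}(B\pi)$. By hypothesis there exist a closed spin $n$-manifold $N$ carrying a $\pscm$ and a map $h\colon N\to B\pi$ with $h_*[N]^\Spin=u_*[M]^\Spin$ in $\ko_n(B\pi)$. The subset $\Omega_n^{\spin,+}(B\pi)$ is a subgroup of $\Omega_n^\spin(B\pi)$, since psc is preserved under disjoint union and under reversal of orientation (which leaves the Riemannian metric untouched). Since $[N,h]\in\Omega_n^{\spin,+}(B\pi)$, it therefore suffices to prove that the difference $[M,u]-[N,h]$, which lies in the kernel of the natural transformation
\[
D\colon\Omega_n^\spin(B\pi)\longrightarrow\ko_n(B\pi)
\]
induced by the Atiyah-Bott-Shapiro orientation $\MSpin\to\ko$, is representable by a spin manifold over $B\pi$ carrying a $\pscm$. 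The essential technical input is therefore the relative kernel lemma
\[
\ker\bigl(D\colon\Omega_n^\spin(B\pi)\to\ko_n(B\pi)\bigr)\;\subset\;\Omega_n^{\spin,+}(B\pi)\qquad(n\ge 5).
\]

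My approach to this kernel lemma would parallel the proof of Theorem \ref{thm:stolz}: show that every class in $\ker D$ is spin bordant over $B\pi$ to a disjoint union of total spaces of fiber bundles with fiber $\HP^2$ and structure group the isometry group of the standard metric on $\HP^2$. Such total spaces carry $\pscm$s by Observation \ref{ob:fib}(2), so the class lands in $\Omega_n^{\spin,+}(B\pi)$. The $\HP^2$-bundle assertion itself is translated via the Pontryagin-Thom identification $\Omega_n^\spin(B\pi)\cong\pi_n(\MSpin\wedge(B\pi)_+)$ into the statement that a certain transfer map of spectra hits every class in $\ker D$ after smashing with $(B\pi)_+$; one verifies this with an Adams spectral sequence argument building on the Anderson-Brown-Peterson splitting of $\MSpin$ and the structure of its $\ko$-orientation.

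Part (ii) follows along the same lines with $\MSpin\to\ko$ replaced by the Thom class $\MSO\to\HZ$, Theorem \ref{thm:bordism}(i) replaced by Theorem \ref{thm:bordism}(ii), and the $\HP^2$-bundle construction replaced by the easier ingredient behind Corollary \ref{cor:GL}, namely that Wall's multiplicative generators of $\Omega_*^\SO$ admit $\pscm$s. The main obstacle is the kernel lemma in case (i): the reduction to $\HP^2$-bundles is conceptually clean, but verifying surjectivity of the associated map of spectra on $\pi_n(-\wedge(B\pi)_+)$ demands real stable-homotopy input and is sensitive to $\pi$, with the case $\pi=\{1\}$ already constituting Theorem \ref{thm:stolz}.
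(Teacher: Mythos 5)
Your reduction is exactly the paper's: the forward direction is definitional, and the converse reduces, via the Bordism Theorem \ref{thm:bordism} and the fact that $\Omega_n^{G,+}(B\pi)$ is a subgroup, to the kernel lemma
\[
\ker\bigl(U^G_*\colon \Omega_n^{G}(B\pi)\to \ko_n(B\pi)\ \text{resp.}\ H_n(B\pi)\bigr)\ \subset\ \Omega_n^{G,+}(B\pi),
\]
and your plan of realizing this kernel by total spaces of $\HP^2$\nb-bundles via a transfer map of spectra is also the paper's. The gap is in how you propose to prove the kernel lemma. The Adams spectral sequence argument you invoke is the one that establishes surjectivity of $\wh T_*\colon \pi_n(\MSpin\wedge\Sigma^8BH_+)\to\pi_n(\wh\MSpin)$ on \emph{homotopy groups}; that suffices for Theorem \ref{thm:stolz} (the case $\pi=1$) but not here, because a map of spectra that is surjective on homotopy groups need not be surjective on $E_n(X)=\pi_n(E\wedge X_+)$ for a general space $X$. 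What is actually needed is the strictly stronger statement that $\wh T$ is a \emph{split surjection of spectra} localized at $2$ (the main technical result of \cite{St1994}); once that is in hand, surjectivity on $(B\pi)_+$\nb-homology follows for \emph{every} $\pi$ simultaneously, so the argument is not ``sensitive to $\pi$'' as you suggest, and running an Adams spectral sequence for $\MSpin\wedge\Sigma^8BH_+\wedge (B\pi)_+$ group by group is not a viable substitute.

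The second omission is odd-primary. The mod $2$ Adams spectral sequence and the splitting of \cite{St1994} only yield the containment localized at $2$; you must separately prove it with $2$ inverted. The paper does this by an entirely different, geometric route (Jung, F\"uhring \cite{Fu2013}): $\wh\MG_*(X)\otimes\Z[\frac12]$ is identified with a bordism theory of $\sP$\nb-manifolds (a smooth Baas--Sullivan-type structure) for a list $\sP$ of polynomial generators of $\Omega_*^G\otimes\Z[\frac12]$ that can be chosen to carry \pscm s ($\HP^2$\nb-bundle total spaces for $G=\spin$, and $\CP^2$ together with these for $G=\SO$), and such $\sP$\nb-manifolds inherit \pscm s. Your sketch of part (ii) via the generators behind Corollary \ref{cor:GL} is in the right spirit (the paper's $2$\nb-local argument there uses that $\MSO$ and $\wh\MSO$ are $2$\nb-locally Eilenberg--Mac Lane spectra), but again covers only the prime $2$. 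So: right skeleton, but the two load-bearing inputs --- the spectrum-level splitting at $2$ and the odd-primary geometric argument --- are missing from the proposal.
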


This was proved by Stolz \cite{St1994} ``localized at $2$'', using stable homotopy theory, and by  Rainer Jung (unpublished) and  F\"uhring \cite{Fu2013} with ``$2$ inverted'', using a geometric argument.  An outline of the proof is given in section \ref{ssec:H_ko}. 

The above theorem in particular implies that for a closed connected spin manifold $M$ of dimension $n\ge 5$ with fundamental group $\pi$ the vanishing of $u_*[M]^\spin\in \ko_n(B\pi)$ is a {\bf sufficient condition} for the existence of a \pscm\ on $M$. By Theorem \ref{thm:Ro} the vanishing of the index invariant $\alpha(M,u)\in \KO_n(C^*\pi)$ is a {\bf necessary condition} for such a metric. The index invariant $\alpha(M,u)$ is the image of $u_*[M]^\spin$ under the homomorphism 
\begin{equation}\label{eq:A}
\begin{tikzcd}
\ko_n(B\pi)\ar[r,"{p}"]&\KO_n(B\pi)\ar[r,"A"]&\KO_n(C^*\pi),
\end{tikzcd}
\end{equation}
where ${p}$ is the natural transformation relating connective  real $K$\nb-homology $\ko_n(\ )$ and periodic real $K$\nb-homology $\KO_n(\ )$, and $A$ is the map known as (real) {\em assembly map} (it is called the {\em Kasparov map} in \cite[p.\ 326]{Ro1986III}). 

In particular, if the composition \eqref{eq:A} is injective, then the vanishing of $\alpha(M,u)$ is a necessary and sufficient for the existence of a \pscm\ on closed connected spin $n$\nb-manifolds $M$, $n\ge 5$. The latter statement is commonly known as the {\em Gromov-Lawson-Rosenberg Conjecture} (Rosenberg conjectured this for finite $\pi$ \cite[Conjecture 0.1]{Ro1991}). Gromov and Lawson conjectured that the vanishing of $p(u_*[M]^\spin)\in \KO_n(B\pi)$ is a necessary and suffient condition for \pscm s \cite[p.\ 89]{GL1983} for ``reasonable'' $\pi$. 

\medskip

The GLR Conjecture has been proved for a number of groups. Concerning  finite groups $\pi$, it has been proved for odd order cyclic groups \cite{Ro1986II}, for $\Z/2$ \cite{RS1994}; more generally, for groups with periodic cohomology by \cite{KwSch1990} (of odd order) and \cite{BGS1997} (in general). It is sufficient to consider $p$\nb-groups, since a closed $n$\nb-manifold $M$, $n\ge 5$, with finite fundamental group $\pi$  carries a \pscm\ \Iff for every prime $p$ the covering of $M$ corresponding to the $p$\nb-Sylow subgroup of $\pi$ carries such a metric \cite[Prop.\ 1.5]{KwSch1990}. Botvinnik and Rosenberg proved the GLR Conjecture for spin $n$\nb-manifolds $M$ with fundamental group $(\Z/p)^r$ for $p$ odd, $n<r$ \cite[Thm.\ 2.3]{BR2005}, with some corrections of the proof provided by Hanke \cite[section 7]{Ha2016}. 

\medskip

Concerning infinite groups, the GLR Conjecture has been proved for free and free abelian groups, as well as for fundamental groups of orientable surfaces \cite{RS1994} (for these groups the assembly map is injective, and $B\pi$ splits stably as a wedge of sphere which implies that $p$ is injective as well). More generally, it has been verified  for groups $\pi$ for which $A$ is injective, the classifying space $B\pi$ is finite dimensional, and manifolds of dimension $n\ge \dim B\pi-4$ \cite[Thm.\ 2.1]{JS2000}. All infinite groups mentioned so far are torsion-free. The GLR-conjecture has also been proved some infinite groups with torsion, for example cocompact Fuchsian groups \cite{DP2003} and crystallographic groups of the form $\Z^n\rtimes\Z/p$, where $p$ is a prime and the $\Z/p$\nb-action on $\Z^n$ only fixes the origin \cite{DL2013}.

\medskip

Unfortunately, neither the GLR Conjecture nor the GL Conjecture  turn out to be true for a general group $\pi$: There is a $5$\nb-dimensional counterexample to the GLR conjecture with fundamental group $\pi=\Z^4\times \Z/3$ \cite{Sch1998}, and there is a $5$\nb-dimensional counterexample to the GL  conjecture with a torsion free fundamental group $\pi$ for which the assembly map is an isomorphism \cite{DSS2003}. These examples exhibit $5$\nb-dimensional spin manifolds for which all index obstructions vanish, but which do not carry \pscm s, since an obstruction coming from stable minimal hypersurfaces is non-zero (see Cor.\ \ref{cor:min_hyp} for the general statement, and \S\ref{ssec:nsconn} for a discussion of this counter example). 

The stable minimal hypersurface method does not produce any restrictions for the existence of \pscm s on manifolds with {\em finite} fundamental group, and no counter examples to the GLR Conjecture for finite fundamental groups are known. For general $\pi$, there is currently no conjectural characterization of the subgroup $\ko_n^+(B\pi)$.

\medskip

The only known obstructions for the existence of a \pscm\ on a closed manifolds $M$ of dimension $\ge 5$ come from index theory or the stable minimal hypersurface method. Index theory is not known to produce obstructions if the universal cover $\wt M$ is non-spin, and the stable minimal hypersurface method does not produce obstructions if the fundamental group is finite. So the following is the optimist's conjecture.

\begin{conj}\label{conj:nonspin_cover} Let $M$ be a connected closed manifold of dimension $n\ge 5$ with finite fundamental group $\pi$ whose universal cover does not admit a spin structure. Then $M$ carries a \pscm.
\end{conj}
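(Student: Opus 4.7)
The plan is to translate the conjecture into a homology statement via Theorem~\ref{thm:ko}(ii). Assuming $M$ is orientable (the non-orientable case can be reduced to the orientable one by passing to the oriented double cover together with a Sylow-type argument), that theorem asserts that $M$ admits a \pscm\ \Iff $u_*[M]^\SO\in H_n^+(B\pi)$. The natural stronger goal is to prove $H_n^+(B\pi)=H_n(B\pi)$ for every finite group $\pi$ and every $n\ge 5$.

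The first step is a Sylow reduction. The argument of \cite[Prop.~1.5]{KwSch1990}, which invokes only the bordism theorem \ref{thm:bordism}, carries over verbatim to the oriented-non-spin-cover setting and shows that $M$ carries a \pscm\ \Iff every $p$-Sylow covering $M_p$ does. Since $\widetilde{M_p}=\widetilde M$ remains non-spin by hypothesis, each $M_p$ satisfies the assumptions of the conjecture with a $p$-group fundamental group, and I may assume throughout that $\pi$ is a finite $p$-group. In that case $H_*(B\pi;\Z)$ is concentrated in $p$-torsion in positive degrees.

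The second, decisive, step is to realize every class $c\in H_n(B\pi)$ as $f_*[N]^\SO$ for some closed oriented $n$-manifold $N$ carrying a \pscm. Natural building blocks are spherical space forms $S^k/G$ for finite subgroups $G\subset\pi$, which inherit positive scalar curvature from the round sphere; products of these with the Wall generators of $\Omega_*^\SO$, all of which admit \pscm s by \cite[Cor.~C]{GL1980}; and total spaces of fiber bundles with spherical-space-form fibers, handled by Observation~\ref{ob:fib}(2). Combined with the transfer homomorphisms along subgroup inclusions $\pi'\subset\pi$, these constructions should generate $H_n(B\pi)$ whenever the mod-$p$ cohomology of $\pi$ is detected on its abelian subgroups.

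The principal obstacle is the second step for non-abelian $p$-groups with intricate cohomology (for instance extraspecial $p$-groups or $p$-groups of high nilpotency class), where producing a psc representative for every $p$-torsion class requires more than the elementary constructions above. A structural alternative would exploit the non-spin hypothesis on $\widetilde M$ directly, using the fact that $w_2(M)$ does not lie in the image of $u^*\colon H^2(B\pi;\F_2)\to H^2(M;\F_2)$, and running an Adams spectral sequence analysis of the appropriate twisted oriented bordism spectrum, in the spirit of \cite{St1994}; the optimistic hope is that the non-spin-cover hypothesis forces all positive-dimensional obstructions to vanish, reducing the conjecture effectively to the simply connected case of Corollary \ref{cor:GL}.
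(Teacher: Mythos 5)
This statement is Conjecture \ref{conj:nonspin_cover}; the paper presents it as an open problem (``the optimist's conjecture'') and contains no proof, so there is nothing of yours to match against a proof in the paper. What you have written is the standard reduction, not a proof, and your own text concedes this. The reduction itself is essentially right: Sylow induction via \cite[Prop.~1.5]{KwSch1990} lets you assume $\pi$ is a finite $p$\nb-group, and Theorem \ref{thm:ko}(ii) converts the problem into showing $u_*[M]^{\SO}\in H_n^+(B\pi)$. But your ``decisive'' second step is exactly the open problem. The building blocks you propose --- spherical space forms, products with Wall's generators of $\Omega_*^{\SO}$, bundles with space-form fibers, transfers from subgroups --- are precisely the constructions used by Botvinnik--Rosenberg and Hanke, and they are known to reach only the \emph{$p$\nb-atoral} part of $H_n(B\pi)$ when $p$ is odd. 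They do not produce the toral classes, the basic example being $B\rho_*[T^n]^{\SO}\in H_n(B(\Z/p)^n)$ for $\rho\colon\Z^n\to(\Z/p)^n$; whether this class lies in $H_n^+$ is singled out as an open question in the paper's final section for every odd prime $p$. So ``prove $H_n^+(B\pi)=H_n(B\pi)$ for all finite $p$\nb-groups'' is not a stronger goal your constructions can deliver; it is the unsolved problem itself, open already for $\pi=(\Z/p)^n$.

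Two further points. Your fallback strategy --- a twisted Adams spectral sequence argument in the spirit of \cite{St1994}, hoping the non-spin-cover hypothesis ``forces all positive-dimensional obstructions to vanish'' --- misreads where the difficulty sits: in the oriented, non-spin-cover regime the homotopy theory is already as simple as it gets ($\MSO$ is $2$\nb-locally a wedge of Eilenberg--Mac\,Lane spectra, which is how Theorem \ref{thm:ko}(ii) is proved), and there are no index-type obstructions left to kill. The remaining issue is the geometric realization of specific homology classes of $B\pi$ by psc manifolds, which homotopy theory alone cannot decide; Joachim's solution for $p=2$ and the eta-invariant arguments for odd $p$ both require genuinely geometric input. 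Finally, your disposal of the non-orientable case by ``passing to the oriented double cover together with a Sylow-type argument'' does not work: the orientation cover corresponds to an index-$2$ subgroup that is in general not a Sylow subgroup, and a \pscm\ on a covering space does not descend. The correct framework there is the twisted bordism version of Theorem \ref{thm:bordism} from \cite{RS1994}.
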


It suffices to consider  $p$\nb-groups, since $M$ carries a \pscm\ \Iff\ the coverings of $M$ whose fundamental groups are the $p$\nb-Sylow subgroups of $\pi$ carry \pscm s by \cite[Prop.\ 1.5]{KwSch1990}. Conjecture \ref{conj:nonspin_cover} is known for many cases of abelian $p$\nb-groups, for example for elementary abelian $2$\nb-groups $\pi=(\Z/2)^r$ 
by combining work of Joachim \cite{Jo2004} and Botvinnik-Rosenberg \cite{BR2005}. 
For elementary abelian $p$\nb-groups $\pi=(\Z/p)^r$ with $p$ odd, it was proved for $n>r$ \cite[Thm.\ 2.3 and Thm.\ 2.4]{BR2005} (with later corrections of the proof provided by \cite{Ha2016}). In fact, they prove a stronger statement, replacing the dimension restriction $n>r$ by the assumption that the element $u_*[M]^\SO\in H_n(B\pi)$ is {\em $p$-atoral}, i.e., for any collection of elements $\alpha_1,\dots, \alpha_n\in H^1(B\pi;\Z/p^\ell)$ for any $\ell>0$ the Kronecker product $\bra{\alpha_1\cup\dots\cup\alpha_n,u_*[M]^\SO}\in \Z/p^\ell$ vanishes. More recently, Conjecture \ref{conj:nonspin_cover} has been proved by Hanke for abelian $p$\nb-groups, $p$ odd, assuming that $u_*[M]^\SO\in H_n(B\pi)$ is $p$\nb-atoral \cite{Ha2019}.

\medskip

The author wishes to thank Jonathan Rosenberg and Bernhard Hanke for bringing important references to the author's attention. This work was partially supported by NSF grant DMS-1547292.
%%%%%%%%%%%%%%%%%%%%%%%%%%%%%%%%%%%%%%%%%%%%%%%%%%%%%%%%%%%%%%%%%%%%%%%%%%%%%%% 
\section{Obstructions to positive scalar curvature metrics}
This section gives a quick overview of the obstructions to \pscm s derived from considering the Dirac operator, which was pioneered by Lichnerowicz in the early 1960's \cite{Li1963}, further developed by Hitchin \cite{Hi1974} and Rosenberg \cite{Ro1986III}. The last part of this section describes additional obstructions coming from the {\em stable minimal hypersurface method} developed by Schoen and Yau \cite{SY1979a}, \cite{SY1979b}.

%%%%%%%%%%%%%%%%%%%%%%%
\subsection{The Dirac operator}\label{ssec:Dirac}

The Dirac operator, constructed in this generality by Atiyah and Singer in the context of their proof of the Index Theorem, is a first order elliptic differential operator $D$ acting on the space $\Gamma(S)$ of smooth sections of a vector bundle $S\to M$ called the {\em spinor bundle} of a Riemannian manifold $M$. The construction of the spinor bundle requires that $M$ comes equipped with a {\em spin structure} (see below). The Dirac operator is closely related to the scalar curvature function $s\in C^\infty(M)$ via the  {\em Lichnerowicz formula} \cite[Ch II, Thm.\ 8.8]{LM1989}
\begin{equation}\label{eq:Li}
D^2=\nabla^*\nabla +\frac 14 s.
\end{equation}
Here $\nabla\colon \Gamma(S)\to \Gamma(T^*M\otimes S)$  is the connection on the spinor bundle $S$, induced by the Levi-Civita connection on the tangent bundle $TM$, and $\nabla^*\colon \Gamma(T^*M\otimes S)\to \Gamma( S)$ is the adjoint of $\nabla$ with respect to the inner product $\bra{\ ,\ }$ on the space of sections $\Gamma(S)$ determined by the Riemannian metric on $M$ and the induced inner product on the fibers of $S$. Using the fact that $D$ is self-adjoint, this implies the following inequality for $\psi\in \Gamma(S)$:
\[
||D\psi||^2=\bra{D\psi,D\psi}=\bra{D^2\psi,\psi}=\bra{\nabla^*\nabla \psi+s\psi,\psi}
=||\nabla\psi||^2+\bra{s\psi,\psi}\ge \bra{s\psi,\psi}.
\]
It follows that if the scalar curvature function $s$ is strictly positive on the closed manifold $M$, then the kernel of $D$ is trivial. 

As explained below, the spinor bundle $S$ is $\Z/2$\nb-graded, i.e., $S=S^+\oplus S^-$, and with respect to the induced decomposition of the space of sections $\Gamma(S)=\Gamma(S^+)\oplus \Gamma(S^-)$, the Dirac operator $D$ has the form
\begin{equation}\label{eq:D_odd}
D=\left(\begin{matrix} 0&D^-\\D^+&0\end{matrix}\right)\colon 
\Gamma(S^+)\oplus \Gamma(S^-)\ra \Gamma(S^+)\oplus \Gamma(S^-).
\end{equation}
Hence the kernel of $D$ decomposes in the form $\ker D=\ker D^+\oplus \ker D^-$. The fact that $D$ is an elliptic operator implies that $\ker D$ is finite dimensional. Moreover, $D$ is self-adjoint, which implies that $D^-$ is the adjoint  of $D^+$, and hence the {\em index of $D^+$}, defined by
\[
\operatorname{index}(D^+):=\dim \ker D^+-\dim \coker D^+
\]
is equal to $\sdim\ker D:=\dim \ker D^+-\dim \ker D^-$, the {\em superdimension}  of the $\Z/2$\nb-graded vector space $\ker  D=\ker D^+\oplus \ker D^-$.

Unlike the dimensions of $\ker D^+$ and $\ker D^-$ which in general depend on the {\em geometry}, i.e., the Riemannian metric used in the construction of the Dirac operator, the index of $D^+$ depends only on the {\em topology}, i.e., on the closed manifold $M$. In fact, according to the Atiyah-Singer Index Theorem, the index of $D^+$ can be identified with a topological invariant $\wh A(M)$, known as Hirzebruch's $\wh A$\nb-genus \cite[Ch.\ III, Thm.\ 13.10]{LM1989}. This is  a characteristic number, that is, a number obtained by evaluating a particular polynomial in the Pontryagin classes of the tangent bundle $TM$ on the fundamental homology class of $M$. 

Summarizing, the Lichnerowicz formula \eqref{eq:Li} together with the Atiyah-Singer index theorem for the Dirac operator imply the following result \cite{Li1963}.

\begin{thm}{\bf (Lichnerowicz).} Let $M$ be a closed spin manifold of dimension $n=4k$ which admits a \pscm. Then the $\wh A$\nb-genus $\wh A(M)$ vanishes. 
\end{thm}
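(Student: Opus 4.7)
The plan is to argue that a positive scalar curvature metric forces the kernel of the Dirac operator to vanish, and then use the Atiyah--Singer Index Theorem to translate this vanishing into the vanishing of $\wh A(M)$. All the ingredients are laid out in the preceding discussion, so the proof amounts to stringing them together.

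First I would equip $M$ with a \pscm\ $g$. Since $M$ is closed, the scalar curvature function $s\colon M\to\R$ attains its minimum, so there exists $s_0>0$ with $s(x)\ge s_0$ for all $x\in M$. Applying the Lichnerowicz formula \eqref{eq:Li} to an arbitrary section $\psi\in\Gamma(S)$ and pairing with $\psi$ gives
\[
\bra{D^2\psi,\psi}=||\nabla\psi||^2+\tfrac14\bra{s\psi,\psi}\ge \tfrac{s_0}{4}||\psi||^2.
\]
For $\psi\in\ker D$ the left-hand side vanishes, forcing $||\psi||=0$ and hence $\psi=0$. Thus $\ker D=0$, and in particular both $\ker D^+$ and $\ker D^-$ are trivial.

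Next I would read off the index. Using the decomposition \eqref{eq:D_odd} and the identification $\operatorname{index}(D^+)=\sdim\ker D=\dim\ker D^+-\dim\ker D^-$, the previous step gives $\operatorname{index}(D^+)=0$. Finally, the Atiyah--Singer Index Theorem identifies $\operatorname{index}(D^+)$ with the topological invariant $\wh A(M)$ (this is where the assumption $n=4k$ enters, so that the $\wh A$-genus is an integer computed from Pontryagin classes of $TM$ on $[M]$). Combining these identifications yields $\wh A(M)=0$.

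The only real content beyond bookkeeping is the pointwise positivity of $s$ translating into an $L^2$-coercive bound for $D^2$, which uses compactness of $M$ in an essential way; everything else is formal. The main conceptual obstacle, which is hidden by the statement, is that one must have the Dirac operator and its index theorem available --- that is, $M$ must be spin so that the spinor bundle $S$ exists and the Lichnerowicz formula makes sense. No further analytic subtlety is needed because $D$ is elliptic on a closed manifold, so $\ker D$ is a finite-dimensional space of smooth sections on which the integration by parts underlying the Lichnerowicz identity is automatically justified.
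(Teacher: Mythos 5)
Your proof is correct and follows exactly the route the paper takes: the Lichnerowicz formula plus strict positivity of $s$ on the compact manifold $M$ forces $\ker D=0$, hence $\operatorname{index}(D^+)=\sdim\ker D=0$, and the Atiyah--Singer Index Theorem identifies this index with $\wh A(M)$. (You even restore the factor $\tfrac14$ that the paper's displayed computation silently drops.)
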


\begin{ex}\label{ex:hyp} The degree $d$ hypersurface  $X^2(d)=\{[z_0,z_1,z_2,z_3]\in \CP^3\mid z_0^d+z_1^d+z_2^d+z_3^d=0\}$ in the complex projective space $\CP^3$ is a simply connected closed manifold of real dimension $4$. Its $\wh A$\nb-hat genus is $\wh A(X^2(d))=\frac {d(d-2)(d+2)}{24}$ \cite[Ch.\ IV, formula 4.4]{LM1989}, and hence $X^2(d)$ does not admit a \pscm\ if $d\ge 4$ and $d$ is even ($d$ even guarantees that $X^2(d)$ has a spin structure). 
\end{ex}

%%%%%%%%%%%%%%%%%%%%%%%%%%%%%%%%%%%%%%%%
\subsection{The Clifford linear Dirac operator}\label{ssec:Cliff_Dirac}
There is an important refinement of Lichnerowicz' theorem due to Hitchin \cite[\S 4]{Hi1974}. Following \cite[Ch.\ II, section 7]{LM1989} this can be obtained using a variant of the Dirac operator called the {\em Clifford linear Dirac operator}. The {\em Clifford algebra} $\Cl_n$ is the quotient of the tensor algebra generated by $\R^n$ modulo the ideal generated by the elements of the form $v\otimes v+||v||^2$ for $v\in \R^n$. The natural $\Z$\nb-grading on the tensor algebra induces a $\Z/2$\nb-grading $\Cl_n=\Cl_n^+\oplus \Cl_n^-$ on the Clifford algebra. An extremely useful property  of the Clifford algebra is that $\Spin(n)$, a non-trivial double covering group of $SO(n)$, can be constructed as a subgroup of the group of units in $\Cl_n$ (namely $\Spin(n)$ is the intersection of the subgroup generated by unit vectors in $\R^n$ and $\Cl_n^+$) \cite[Ch I, Thm.\ 2.9]{LM1989}. 

To describe the Clifford linear Dirac operator, we first need to be more explicit about the construction of the spinor bundle.
Let $M$ be an oriented Riemannian $n$\nb-manifold and let $SO(M)\to M$ be the oriented orthonormal frame bundle of $M$; its fiber $SO(M)_x$ over a point $x\in M$ consists of all orientation preserving isometries $f\colon \R^n \overset\cong\ra T_xM$. 

\begin{defn} A {\em spin structure}  on $M$ (see e.g.\ \cite[Ch.\ II, Def.\ 1.3]{LM1989}) is a principal $\Spin(n)$\nb-bundle $\Spin(M)\to M$ equipped with a double covering map $q\colon \Spin(M)\to  SO(M)$ which is $\Spin(n)$\nb-equivariant (with $\Spin(n)$ acting on $\SO(M)$ via the projection map $p\colon \Spin(n)\to \SO(n)$), and which  makes the diagram 
\[
\begin{tikzcd}
\Spin(M)\ar[rr,"q"]\ar[rd]&&SO(M)\ar[dl]\\
&M&
\end{tikzcd}
\]
commutative. 
\end{defn}

\begin{construction} {(\bf Spinor bundle.)} Let $M$ be a Riemannian spin $n$\nb-manifold. Then the {\em spinor bundle} on $M$ is a $\Z/2$\nb-graded vector bundle associated to the principal $\Spin(n)$\nb-bundle $\Spin(M)\to M$ provided by the spin structure on $M$. More precisely, there is a spinor bundle $S_\Delta$ associated to any $\Z/2$\nb-graded left module $\Delta$ over the Clifford algebra $\Cl_n$. Since $\Spin(n)\subset \Cl_n^\times$ is a subgroup of the group of invertible elements of $\Cl_n$, the module $\Delta$ is in particular a representation of $\Spin(n)$, and 
\[
S_\Delta:=\Spin(M)\times_{\Spin(n)}\Delta\ \ra\ M
\]
is the associated vector bundle. The $\Z/2$\nb-grading of $\Delta=\Delta^+\oplus \Delta^-$ is preserved by the $\Spin(n)$\nb-action, and hence it induces a $\Z/2$\nb-grading $S_\Delta=S_\Delta^+\oplus S_\Delta^-$ on the spinor bundle $S_\Delta$. 

There is a conventional choice of an irreducible $\Z/2$\nb-graded module over $\Cl_n$ (there are one or two such modules, depending on $n\mod 4$); the corresponding spinor bundle is the bundle we denoted by $S$ in the previous section.
\end{construction}

For {\em any} choice of the $\Cl_n$\nb-module $\Delta$ there is a Dirac operator $D^M_\Delta\colon \Gamma(S_\Delta)\to \Gamma(S_\Delta)$ with the same formal properties of the Dirac operator $D$ discussed in the previous section. In particular, $D^M_\Delta$ is self-adjoint, and it is invertible if the Riemannian metric used in its construction has positive scalar curvature. 

At first glance choosing the left $\Cl_n$\nb-module $\Delta$ to be $\Cl_n$ itself might seem silly, since $\Delta$ decomposes as a sum of irreducible modules, and so choosing an irreducible module seems less redundant. However, the spinor bundle $S_{\Cl_n}=\Spin(M)\times_{\Spin(n)}\Cl_n$ has more structure: $\Cl_n$ is a $\Cl_n$\nb-bimodule: its left module structure is used for the construction of $S_{\Cl_n}$, while its right module structure gives each fiber of $S_{\Cl_n}$ the structure of a $\Z/2$\nb-graded right module. Hence the space $\Gamma(S_{\Cl_n})$ of smooth sections is a right $\Cl_n$\nb-module. For that reason, the vector bundle $S_{\Cl_n}$ is called the {\em $\Cl_n$\nb-linear spinor bundle}. The Dirac operator
\begin{equation}\label{eq:Cl_linear}
D^M_{\Cl_n}\colon \Gamma(S_{\Cl_n})\ra \Gamma(S_{\Cl_n})
\end{equation}
commutes with the right $\Cl_n$\nb-action, and hence $D_{\Cl_n}$ is known as the {\em $\Cl_n$\nb-linear Dirac operator}. 

To describe the index invariant $\alpha(M)\in \KO_n$, we first recall that the real $K$\nb-theory group $\KO_n=\KO_n(\point)=\KO^{-n}(\point)$ can be described in terms of modules over Clifford algebras. More precisely, Atiyah, Bott and Sharpiro constructed an isomorphism \cite[Thm.\ 11.5]{ABS1963}, \cite[Ch.\ I, Thm.\ 9.27]{LM1989}
\begin{equation}\label{eq:KO_Cliff}
\fM(\Cl_n)/i^*\fM(\Cl_{n+1})\cong\KO_n.
\end{equation}
Here $\fM(A)$ denotes for a $\Z/2$\nb-graded algebra $A$ the group completion of the semi-group of isomorphism classes of $\Z/2$\nb-graded finitely generated $A$\nb-modules (with the semi-group structure provided by the direct sum of modules). The homomorphism 
\begin{equation}\label{eq:i^*}
i^*\colon \fM(\Cl_{n+1})\to \fM(\Cl_{n})
\end{equation}
 is induced by the inclusion map $i\colon \Cl_n\into \Cl_{n+1}$. We remark that we do not need to specify whether $\fM(\Cl_n)$ consists of left-module or right-modules, since there is an anti-involution $\beta\colon \Cl_n\to \Cl_n$ (i.e., $\beta(aa')=\beta(a')\beta(a)$, without signs), determined by $\beta(v)=v$ for $v\in \R^n\subset\Cl_n$. This anti-involution allows us to turn a right module into a left module and vice versa. 

The $\Cl_n$\nb-linearity of the operator \eqref{eq:Cl_linear} implies that its kernel $\ker D^M_{\Cl_n}$ is a $\Z/2$\nb-graded $\Cl_n$\nb-module. If the manifold $M$ is closed, the fact that $D^M_{\Cl_n}$ is an elliptic differential operator implies that its kernel is finite dimensional, and hence $\ker D^M_{\Cl_n}\in \fM(\Cl_n)$. The index invariant $\alpha(M)$ is defined by
\begin{equation}\label{eq:alpha}
\alpha(M):=[\ker D^M_{\Cl_n}]\in \fM(\Cl_n)/i^*\fM(\Cl_{n+1})\cong \KO_n.
\end{equation}
If the Riemannian metric $g$ used in the construction of the Dirac operator $D^M_{\Cl_n}$ has \psc, then by the Lichnerowicz formula \eqref{eq:Li} $\ker D^M_{\Cl_n}$ is trivial, and hence $\alpha(M)=0$. This proves the Lichnerowicz-Hitchin Theorem \ref{thm:LH}. 

\medskip

We note that the construction of the Clifford linear Dirac operator $D^M_{\Cl_n}$ involves the Riemannian metric $g$ on $M$ and  hence $\ker D^M_{\Cl_n}$ depends on $g$. For a path of metrics $g_t$, there is a corresponding path of Clifford linear Dirac operators $D_t$. The dimension of $\ker D_t$ might jump at a value $t_0$ due to an eigenvalue $\lambda_t$ of $D_t$ approaching $0$ for $t\to t_0$.

In the rest of this section we explain why $[\ker D^M_{\Cl_n}]\in \fM(\Cl_n)/i^*\fM(\Cl_{n+1})$ {\em does not} depend on the metric, i.e., why it is an invariant of the smooth manifold $M$. 
Let $E_\lambda$ be the finite dimensional eigenspace of $D=D^M_{\Cl_n}$ with eigenvalue $\lambda\in \R\setminus \{0\}$ (all eigenvalues are real since $D$ is self-adjoint). Let $\tau$ be the grading involution of the $\Z/2$\nb-graded vector space $\Gamma(S_{\Cl_n})$; it anti-commutes with $D$, i.e., $\tau D=-D\tau$. This is equivalent to the fact that  in the matrix decomposition \eqref{eq:D_odd} of $D$ with respect to the grading decomposition $\Gamma(S_{\Cl_n})=\Gamma(S^+_{\Cl_n})\oplus \Gamma(S^-_{\Cl_n})$ the diagonal entries vanish. In particular, for $\psi\in E_\lambda$
\[
D(\tau(\psi))=-\tau(D(\psi))=-\tau(\lambda \psi)=-\lambda\tau(\psi),
\]
and hence $\tau(\psi)\in E_{-\lambda}$. The Clifford linearity of $D$ implies that $E_\lambda\oplus E_{-\lambda}$ is a $\Z/2$\nb-graded module over $\Cl_n$. We claim that it is in the image of the homomorphism \eqref{eq:i^*}, i.e., that the $\Cl_n$\nb-module structure on $E_\lambda\oplus E_{-\lambda}$ can be extended to a $\Cl_{n+1}$\nb-module structure. It suffices to construct an endomorphism $e_{n+1}$ of $E_\lambda\oplus E_{-\lambda}$ satisfying $e_{n+1}^2=-1$ and $e_{n+1}v=-ve_n$ for $v\in \R^n\subset \Cl_n$. It is easy to check that $e_{n+1}:=\frac 1{|\lambda|}D\tau$ has these properties. 

This show that $\ker D^M_{\Cl_n}$ represents the same class in $\fM(\Cl_n)/i^*\fM(\Cl_{n+1})$ as 
\begin{equation}\label{eq:cutoff}
\bigoplus_{|\lambda|<\mu}E_\lambda
=\ker D^M_{\Cl_n}\oplus\bigoplus_{0<\lambda<\mu}E_{-\lambda}\oplus E_\lambda
\end{equation}
for any $\mu>0$. If $\mu$ is  not an eigenvalue of $D^M_{\Cl_n}$ constructed using a metric $g$, then $\mu$ is also not an eigenvalue for those operators obtained from metrics in an open neighborhood of $g$ in the space of metrics. This implies that the isomorphism class of the $\Cl_n$\nb-module \eqref{eq:cutoff} is constant in that neighborhood and shows that its class in $\fM(\Cl_n)/i^*\fM(\Cl_{n+1})$ is in fact independent of the metric. 
%%%%%%%%%%%%%%%%%%%%%%%%%%%%%%%%%%%%%%%
\subsection{Obstructions associated to coverings}\label{ssec:obs_cov}
In this section, let $M$ be a closed connected spin manifold of dimension $n$. Let $\pi$ be a discrete group, let $\wt M\to M$ be a principal $\pi$\nb-bundle and let $f\colon M\to B\pi$ be its classifying map. We are particularly interested in the case where $\pi$ is the fundamental group of $M$ and $\wt M\to M$ is the universal covering, but it is useful to work in this generality. Let $D_{\Cl_n}^{\wt M}$ be the Clifford linear Dirac operator on $\wt M$.  This operator commutes with the action of $\Cl_n$ and the action of $\pi$ by deck transformations on $\wt M$, and hence with the action of $\Cl_n\otimes\R\pi$, where $\R\pi$ is the real group ring of $\pi$. In particular, $\ker D_{\Cl_n}^{\wt M}$ is a module over $\Cl_n\otimes\R\pi$. 

Assuming that $\pi$ is {\em finite}, the manifold $\wt M$ is compact, and hence $\ker D_{\Cl_n}^{\wt M}$ is a finite dimensional vector space. The argument in the previous section implies that the class
\[
\alpha(M,f):=[\ker  D_{\Cl_n}^{\wt M}]\in \fM(\Cl_n\otimes\R\pi)/i^* \fM(\Cl_{n+1}\otimes\R\pi)
\]
depends only on the manifold $M$, not the metric on $M$, and only on the homotopy class of the map $f\colon M\to B\pi$. The $K$\nb-theory group $\KO_n(\R\pi)$ is equal to $\fM(\Cl_n\otimes\R\pi)/i^* \fM(\Cl_{n+1}\otimes\R\pi)$ by definition.

For infinite $\pi$, the construction involves more analytic details. A central role is played by the flat vector bundle
\[
\wt M\times_\pi C^*\pi\ra M,
\]
where $C^*\pi\supset \R\pi$ is the {\em reduced group $C^*$\nb-algebra of $\pi$}, a suitable completion of $\R\pi$. The dimension of this vector bundle is the cardinality of $\pi$; in particular, it is infinite dimensional if $\pi$ is infinite. Fortunately, it can also be thought of as a {\em line bundle}, if we don't regard it as a real vector bundle, but rather as a bundle of right $C^*\pi$\nb-modules. The Clifford linear spinor bundle on $M$ can be tensored with this flat ``line bundle'' to obtain a vector bundle whose sections are modules over the $C^*$\nb-algebra $\Cl_n\otimes C^*\pi$. Moreover, there is a twisted Dirac operator acting on this section 
space which is $\Cl_n\otimes C^*\pi$\nb-linear. Using Kasparov's description of $\KO_n(A)$ for real $C^*$\nb-algebras $A$, and the index theory of Fredholm operators acting on Hilbert $A$\nb-modules developed by Miscenko-Fomenko, Rosenberg extracts the index of this twisted Dirac operator as an element $\alpha(M,f)\in \KO_n(C^*\pi)$ \cite[section 3]{Ro1986III}. 
%%%%%%%%%%%%%%%%%%%%%%%%%%%%%%%%%%%%%%%
\subsection{Obstructions coming from stable minimal hypersurfaces}
\label{ssec:min_hyp}

The following result was proved by Schoen-Yau using stable minimal hypersurfaces \cite[Proof of Thm.\ 1]{SY1979a}, \cite[Thm.\ 5.1]{SY1979b}.

\begin{thm}\label{thm:min_hyp}{\bf (Schoen and Yau).}
Let $M$ be a closed Riemannian manifold of dimension $n\ge 3$ equipped with a \pscm. If $N\subset M$ is a smooth hypersurface with trivial normal bundle, and if $N$ is a local minimum of the volume functional,  then $N$ carries a \pscm. 
\end{thm}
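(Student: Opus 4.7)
The plan is the classical Schoen--Yau strategy: combine the stability of $N$ as a minimal hypersurface with the Gauss equation for scalar curvature to obtain a spectral inequality on $N$, and then conformally rescale the induced metric to produce \psc.

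Since the normal bundle of $N\subset M$ is trivial, I fix a global unit normal field $\nu$ along $N$. For any smooth $f\colon N\to\R$ the normal variation $N_t:=\{\exp_x(tf(x)\nu(x)):x\in N\}$ is then a well-defined one-parameter family of hypersurfaces with $N_0=N$. Because $N$ is a local minimum of volume, it is minimal (mean curvature $H=0$), and the second variation of area is non-negative. After the standard integration by parts this becomes the stability inequality
\begin{equation*}
\int_N|\nabla f|^2\,dV_N \;\ge\; \int_N\bigl(|A|^2+\operatorname{Ric}_M(\nu,\nu)\bigr)f^2\,dV_N,
\end{equation*}
with $A$ the second fundamental form of $N\subset M$. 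The Gauss equation, traced twice and specialized to $H=0$, yields the algebraic identity
\begin{equation*}
s_M \;=\; s_N + 2\operatorname{Ric}_M(\nu,\nu) + |A|^2,
\end{equation*}
so $|A|^2+\operatorname{Ric}_M(\nu,\nu)=\tfrac12|A|^2+\tfrac12(s_M-s_N)$. Substituting this into the stability inequality and multiplying by $2$ produces the key inequality
\begin{equation*}
\int_N\bigl(2|\nabla f|^2+s_Nf^2\bigr)\,dV_N \;\ge\; \int_N\bigl(s_M+|A|^2\bigr)f^2\,dV_N \;\ge\; \int_N s_Mf^2\,dV_N,
\end{equation*}
and since $M$ is closed with $s_M>0$ the right-hand side is bounded below by $\delta\int_Nf^2$ for some $\delta>0$. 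Hence the elliptic operator $L:=-2\Delta_N+s_N$ on $N$ has strictly positive first eigenvalue.

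To convert this spectral positivity into a \pscm\ on $N$, I first handle the case $m:=\dim N=n-1\ge 3$. The conformal Laplacian of $(N,g_N)$ is $L_N^{\mathrm{conf}}:=-a_m\Delta_N+s_N$ with $a_m=4(m-1)/(m-2)\ge 4>2$, so by the variational (Rayleigh-quotient) characterization of the first eigenvalue the operator $L_N^{\mathrm{conf}}$ also has positive first eigenvalue. Let $\phi>0$ be its first eigenfunction; then the conformally rescaled metric $g_N':=\phi^{4/(m-2)}g_N$ has scalar curvature $s_{g_N'}=\phi^{-(m+2)/(m-2)}L_N^{\mathrm{conf}}\phi=\lambda_1(L_N^{\mathrm{conf}})\phi^{-4/(m-2)}>0$, so $N$ carries a \pscm. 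In the remaining case $n=3$ (so $\dim N=2$) I plug $f\equiv 1$ into the key inequality to get $\int_Ns_N\,dV_N>0$; by the Gauss--Bonnet theorem this forces $\chi(N)>0$, so $N$ is diffeomorphic to $S^2$ or $\RP^2$, each of which carries a \pscm.

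The principal obstacle is bookkeeping: deriving the second variation of area with the correct signs and applying the traced Gauss equation correctly. Once these standard formulas are in hand the rest is the substitution displayed above together with the standard conformal Laplacian trick in dimension $\ge 3$ (or the Gauss--Bonnet shortcut in dimension $2$). The assumption of trivial normal bundle enters only to produce the global normal variation $f\nu$ needed for the stability inequality.
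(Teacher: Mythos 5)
Your proposal is correct and is precisely the Schoen--Yau argument (stability inequality, twice-traced Gauss equation with $H=0$, then conformal rescaling by the first eigenfunction of the conformal Laplacian, with the Gauss--Bonnet shortcut when $\dim N=2$); the paper itself gives no proof of this theorem, deferring to \cite{SY1979a} and \cite{SY1979b}, which is exactly the argument you reconstruct.
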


Results from geometric measure theory guarantee that if $M$ is orientable of dimension $n\le 7$, and $x\in H_{n-1}(M)$ is a non-vanishing integral homology class, then there is a smooth orientable hypersurface $N\subset M$ which represents $x$ and is a local minimum of the volume functional (see \cite[Thm.\ 1.3]{Sch1998} for relevant references on geometric measure theory). Without the dimension restriction $n\ge 7$, any homology class can be represented by an area-minimizing current $N$ whose singularities have codimension $\ge 7$ in $N$. Hence the condition $n\ge 7$ guarantees that $N$ has no singularities. 

For a topological space $X$, let $H_n^+(X)$ be the subgroup of $H_n(X)$ consisting of homology classes  of the form $f_*[M]$, where $M$ is a closed oriented $n$\nb-manifold that carries a \pscm, $[M]\in H_n(M)$ is the fundamental homology class of $M$, and $f\colon M\to X$ is a map. The author observed the following consequence of the results discussed above (see \cite[Cor.\ 1.5]{Sch1998}).

\begin{cor}\label{cor:min_hyp}
For a topological space $X$, let $\alpha\cap\colon H_n(X)\to H_{n-1}(X)$ be the map given by the cap product with a cohomology class $\alpha\in H^1(X)$ (all (co)homology groups here have integer coefficients). Then for $3\le n\le 7$ this homomorphism maps $H_n^+(X)$ to $H^+_{n-1}(X)$. 
\end{cor}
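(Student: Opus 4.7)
The plan is to represent the cap product $\alpha\cap x$ geometrically by the fundamental class of an area-minimizing smooth hypersurface in $M$, and then apply the Schoen--Yau theorem (Theorem \ref{thm:min_hyp}).

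First I would unpack the hypothesis and reduce to a statement about a single fixed manifold. A class $x\in H_n^+(X)$ is by definition of the form $x=f_*[M]^{\SO}$ for some closed oriented $n$-manifold $M$ carrying a \pscm\ $g$ and some continuous map $f\colon M\to X$. The projection formula for the cap product gives
\[
\alpha\cap x \;=\; f_*\bigl(f^*\alpha\cap [M]^{\SO}\bigr),
\]
so it suffices to realize $f^*\alpha\cap [M]^{\SO}\in H_{n-1}(M)$ as the pushforward, along some continuous map into $M$, of the fundamental class of a closed oriented $(n-1)$-manifold carrying a \pscm. Since $H^1(M;\Z) = [M,S^1]$, the class $f^*\alpha$ is represented by a smooth map $h\colon M\to S^1$; by Sard's theorem there is a regular value $p\in S^1$, and the preimage $N_0 := h^{-1}(p)$ is a closed oriented embedded hypersurface in $M$ with trivial normal bundle (the pullback of the trivial normal bundle of $\{p\}\subset S^1$), whose fundamental class satisfies $(\iota_0)_*[N_0]^{\SO} = f^*\alpha\cap[M]^{\SO}$, where $\iota_0\colon N_0\hookrightarrow M$ is the inclusion.

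The second and main step is to replace $N_0$ by a volume-minimizing representative of the same homology class. Here I would use the geometric measure theory results cited immediately after Theorem \ref{thm:min_hyp}: since $3\le n\le 7$, an integral current of least mass in $[N_0]$ exists, and its singular set has codimension $\ge 7$ inside itself; because the dimension of the current is $n-1\le 6$, this singular set is empty. The outcome is a smooth closed oriented embedded hypersurface $N\subset M$, homologous to $N_0$, which is a local minimum (in fact a global minimum in its homology class) of the volume functional of $(M,g)$. The normal bundle of $N$ in $M$ is trivial since $N$ is an oriented codimension-one submanifold of an oriented manifold, so Theorem \ref{thm:min_hyp} supplies a \pscm\ on $N$.

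Finally, writing $\iota\colon N\hookrightarrow M$ for the inclusion, the composition $f\circ\iota\colon N\to X$ satisfies
\[
(f\circ\iota)_*[N]^{\SO} \;=\; f_*\iota_*[N]^{\SO} \;=\; f_*\bigl(f^*\alpha\cap [M]^{\SO}\bigr) \;=\; \alpha\cap x,
\]
exhibiting $\alpha\cap x$ as an element of $H_{n-1}^+(X)$. The only nontrivial input is the middle step, namely the regularity theorem for codimension-one area-minimizing integral currents, which is precisely what both forces the restriction $n\le 7$ (beyond which the minimizer could have a singular set of dimension $n-8\ge 0$) and produces a bona fide smooth hypersurface to which Theorem \ref{thm:min_hyp} can be applied. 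The rest is formal cap-product manipulation together with the Poincar\'e-dual interpretation of $H^1(M;\Z)$ by smooth maps to $S^1$.
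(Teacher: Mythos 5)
Your proposal is correct and follows essentially the same route as the paper's proof: represent $\alpha\cap x$ by the restriction of $f$ to a volume-minimizing hypersurface $N\subset M$ dual to $f^*\alpha$, use the regularity of codimension-one area-minimizing currents in ambient dimension $\le 7$ to get smoothness, and apply Theorem \ref{thm:min_hyp}. You merely spell out details (the projection formula, the $S^1$ representative, the triviality of the normal bundle) that the paper leaves implicit.
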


\begin{proof} If $x\in H_n(X)$ is represented by $f\colon M\to X$, then $\alpha\cap x\in H_{n-1}(X)$ is represented by $f_{|N}\colon H\to M$, where $N\subset M$ is a hypersurface in $M$ representing the homology class $f^*\alpha\cap [M]\in H_{n-1}(M)$. If $x$ belongs to $  H_n^+(X)\subset H_n(X)$ we can assume that $M$ carries a \pscm\ and thanks to the dimension restriction $n\le 7$, we can assume that the hypersurface $N\subset M$ is a local minimizer of the volume function. Hence $N$ carries a \pscm\ by Theorem \ref{thm:min_hyp}, and so $f_{|N}\colon N\to X$ represents a homology class in $H_{n-1}^+(X)$.
\end{proof}

%%%%%%%%%%%%%%%%%%%%%%%%%%%%%%%%%%%%%%%
%%%%%%%%%%%%%%%%%%%%%%%%%%%%%%%%%%%%%%%
\section{Constructions of \pscm s}

\subsection{Basic examples of \pscm s}
There are many manifolds which carry \pscm s. For example, the  standard Riemannian metric  on the sphere $S^n$ (known as ``round metric'') has \psc\ for $n\ge 2$. The complex projective space $\CP^n$ is the quotient of the sphere $S^{2n+1}\subset \C^{n+1}$ with respect to the $S^1$\nb-action given by scalar multiplication. Since $S^1$ acts by isometries, the round metric on $S^{2n+1}$ induces a metric on $\CP^n$ known as {\em Fubini-Study metric} (characterized by the property that the projection map $S^{2n+1}\to \CP^n$ is a Riemannian submersion). The Fubini-Study metric on $\CP^n$ has \psc. The same construction yields a \pscm\ on the real projective space $\RP^n$, $n\ge 2$ (the quotient of an isometric $\Z/2$\nb-action on $S^n$) and the quaternionic projective space $\HP^n$ (the quotient of an isometric $\SU(2)$\nb-action on $S^{4n+1}\subset \H^{n+1}$).  

The following observation is elementary, but very useful to find many examples of manifolds that carry \pscm s. 

\begin{nnumber}{\bf Observation.}\label{ob:fib} Let $g$ be a \pscm\ on a closed manifold $M$. Then the following manifolds admit metrics of \psc:
\begin{enumerate}[topsep=2pt,itemsep=-2pt]
\item The product $M\times N$ for any closed manifold $N$.
\item The total space of any fiber bundle $E\to N$ with fiber $M$ whose structure group is the isometry group of $(M,g)$; i.e., there are local trivializations such that all transition functions are isometries of $(M,g)$. 
\end{enumerate}
\end{nnumber}
To prove the first claim, pick a metric  $h$ on $N$ (not necessarily with positive scalar curvature) and consider the product metric $g\times h$ on $M\times N$. The scalar curvature at a point $(x,y)\in M\times N$ with respect to the metric $g\times h$ is given by the formula
\[
s((x,y);g\times h)=s(x;g)+s(y;h).
\]
So without further assumptions on $h$ there is no reason for the scalar curvature of the product metric $g\times h$ to be positive. The trick is to ``shrink $M$'', i.e., to replace $g$ by $tg$ for $t>0$, and letting $t$ approach $0$. Then
\[
s((x,y);tg\times h)=s(x;tg)+s(y;h)=\frac 1ts(x;g)+s(y;h),
\]
and hence this is {\em positive} for $t$ sufficiently small thanks to $s(x;g)>0$. Compactness of  $M$ and $N$ guarantee that there is one $t$ that will do the job for all $(x,y)\in M\times N$. 

In the case of a fiber bundle $E\to N$ with fiber $M$, the construction of a metric on $E$ requires the choice of a metric $h$ on $N$, and a connection on the fiber bundle. The condition on the structure group guarantees that the connection can be chosen such that the fibers are totally geodesic. Using the O'Neill formulas, which express the curvature of $E$ in terms of the connection and the curvatures of $M$ and $N$, then shows that the ``shrinking trick" still works for such a fiber bundle: shrinking the fibers $M$ sufficiently makes the scalar curvature function on the total space $E$ everywhere positive. 
%%%%%%%%%%%%%%%%%%%%%%%%%%%%%%%%%%%%%%
\subsection{Surgery results}
Surgery is a basic technique to modify the topology of a manifold $M$ by removing a piece of $M$ and replacing it by another piece. The simplest example is the surgery that replaces the disjoint union of two $n$\nb-manifolds $M_1$, $M_2$ by their connected sum $M_1\# M_2$. Independently Gromov-Lawson \cite[Thm.\ A]{GL1980} and Schoen-Yau \cite[Cor.\ 6]{SY1979a} proved the following result.

\begin{thm}[\bf Surgery Theorem] \label{thm:surgery} Let $M$ be a manifold that carries a \pscm. Then any manifold obtained by a surgery of codimension $\ge 3$ also carries a \pscm. 
\end{thm}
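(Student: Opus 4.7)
The plan is to construct a positive scalar curvature metric on the surgered manifold $M'$ by a local deformation of the given \pscm\ $g$ on $M$ in a tubular neighborhood of the embedded sphere along which the surgery is performed, exploiting the codimension hypothesis to supply a reservoir of positive scalar curvature from a shrinking linking sphere. Writing $n = \dim M$, suppose the surgery is performed along an embedded sphere $S^p\into M$, so that $M'$ is obtained from $M$ by excising a tubular neighborhood $U\cong S^p\times D^{n-p}$ and gluing in the handle $D^{p+1}\times S^q$ along the common boundary $S^p\times S^q$, where $q:=n-p-1$. The codimension hypothesis translates to $q\ge 2$, and the normal bundle of $S^p$ in $M$ is trivial because $S^p$ bounds the disk factor of the handle, so the identification $U\cong S^p\times D^{n-p}(\bar r)$ is legitimate for some small $\bar r>0$.

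First I would pass to Fermi normal coordinates on $U$, in which $g$ is a perturbation of the product metric $g|_{S^p}+g_{\mathrm{eucl}}$ that becomes arbitrarily small as $\bar r\to 0$; since positivity of scalar curvature is an open condition, we may shrink $\bar r$ and still have $g$ globally psc. The goal is then to modify $g$ inside $U$ so that near $\partial U$ the modified metric is isometric to $S^p\times[0,L]\times S^q(\epsilon)$ for some very small $\epsilon>0$, and subsequently to extend across the handle by a psc metric on $D^{p+1}\times S^q(\epsilon)$ built from a ``torpedo'' metric on $D^{p+1}$ which is cylindrical near its boundary and has nonnegative curvature.

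The bending construction of Gromov-Lawson realizes this modification by viewing $U$ as the flat slice $U\times\{0\}\subset M\times\R$ and replacing it, over the disk factor, by the graph of a radially symmetric profile function $r\mapsto t(r)$ chosen with care. The induced hypersurface in $M\times\R$ is diffeomorphic to $U$, but its induced Riemannian metric has the desired tube-like shape near the inner boundary. Its scalar curvature is computed from the ambient product metric (close to $g$) via the Gauss equation, which introduces a correction coming from the second fundamental form of the graph. In the resulting tube region the metric is approximately $g|_{S^p}+dt^2+\epsilon^2 g_{S^q}$, whose scalar curvature contains the term $q(q-1)/\epsilon^2$; since $q\ge 2$ this term is strictly positive and diverges as $\epsilon\to 0$, providing exactly the positive curvature needed to absorb the negative contribution from the bend.

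The main obstacle is the explicit design of the profile $t(r)$ together with a pointwise verification, via the Gauss equation, that the induced scalar curvature stays positive throughout the transition from the psc region where $t\equiv 0$ to the shrunken tube. This differential-geometric estimate hinges essentially on the codimension condition $q\ge 2$: the ``bending rate'' of the graph must be controlled against $\epsilon$ so that the divergent positive term $q(q-1)/\epsilon^2$ dominates all second-fundamental-form corrections, and if instead $q\in\{0,1\}$ no such reservoir exists and the method fails. Once such a profile is produced, the matching of the modified metric to the original $g$ outside a slightly larger neighborhood of $S^p$, together with a capping-off by the torpedo-times-sphere metric on $D^{p+1}\times S^q(\epsilon)$ (whose psc property follows from the shrinking-fiber principle of Observation \ref{ob:fib}), yields the desired \pscm\ on $M'$.
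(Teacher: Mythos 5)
Your proposal is correct and takes essentially the same route as the paper: the Gromov--Lawson bending construction, realizing the complement of the tubular neighborhood of the surgery sphere as a hypersurface (graph of a radial profile) in $M\times\R$, with the codimension hypothesis $q\ge 2$ supplying the large positive term $q(q-1)/\epsilon^2$ from the shrinking linking spheres that dominates the second-fundamental-form corrections from the bend, followed by capping off the handle with a product of a torpedo metric and a small round $S^q$. The only point you elide --- as does the paper's own outline --- is the final deformation needed to bring the induced metric on the tube to the exact standard product before gluing.
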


This result implies in particular that if two manifolds $M_1$, $M_2$ of dimension $\ge 3$ carry \pscm s, then so does their connected sum $M_1\# M_2$. We first outline the proof 
for this special case. Then we recall what a $k$\nb-surgery is, and finally we outline the proof of the Surgery Theorem in the general case. 

\medskip

Let $M_1$, $M_2$ be closed connected manifolds of dimension $n$. The {\em connected sum} $M_1\#M_2$ is the closed connected $n$\nb-manifold obtained in the following way:  Deleting an open $n$\nb-ball $\mathring D^n$ from $M_i$ gives a manifold $M_i\setminus \mathring D^n$ with boundary $S^{n-1}$. Identifying a point on the boundary of $M_1\setminus \mathring D^n$ with the corresponding point on the boundary of $M_2\setminus \mathring D^n$ yields the connected sum
\begin{equation}\label{eq:conn_sum}
M_1\# M_2:=(M_1\setminus \mathring D^n)\cup_{S^{n-1}}(M_2\setminus \mathring D^n).
\end{equation}
Let $g_1$, $g_2$ be \pscm s on $M_1$ resp.\ $M_2$. The key for obtaining a \pscm\ on the connected sum $M_1\# M_2$ is to modify the metric $g_i$ to obtain a  \pscm\ $g_i'$ on the manifold $M_i\setminus \mathring D^n$ which
\begin{enumerate}[label=(\roman*),topsep=2pt,itemsep=-4pt]
\item restricts to the standard round metric on the boundary $\p (M_i\setminus \mathring D^n)=S^{n-1}$, and
\item is a product metric near the boundary. 
\end{enumerate}
These conditions guarantee that the metrics $g_1'$ and $g_2'$ fit together to give a \pscm\ on the connected sum $M_1\# M_2$. We remark that the dimension restriction $n\ge 3$ ensures that the standard metric on $S^{n-1}$ has \psc. 

Let $g$ be \pscm\ on a manifold $M$ of dimension $n\ge 3$ and fix a point $p\in M$. The modified \pscm\ $g'$ with properties (i) and (ii) on the complement of a disk $D^n\subset M$ containing $p$ is constructed as follows. Think of $M\setminus \mathring D^n$ as a hypersurface $N$ in the product $M\times [0,1]$ as shown in the following picture:
\begin{equation*}
\begin{tikzpicture}[scale=.6]
\fill[fill=black!10](-3,4)--(3,6)--(3,-4)--(-3,-6)--cycle;
\fill[fill=black!10](0,4)..controls (0,2) and(2,.5)..(8,.5) arc(90:270:.25 and .5)
--(7,-.5)..controls (2,-.5) and (0,-2)..(0,-4)--cycle;
\draw[thick](0,4)..controls (0,2) and(2,.5)..(7,.5)--(8,.5);
\draw[thick](0,-4)..controls (0,-2) and(2,-.5)..(7,-.5)--(8,-.5);
\draw[thick](8,0)circle (.25 and .5);
\filldraw (0,0) circle(.1)node[below]{$(p,0)$};
\node[label={[rotate=15]right: $M\times\{0\}$}] at (-3.2,3.5){};
\node at (2,-7){The hypersurface $N\subset M\times [0,1]$};
\end{tikzpicture}
\end{equation*}
More precisely, the hypersurface $N$ is determined by choosing a smooth curve $\gamma$ in the $(t,r)$\nb-plane as shown below: 
\begin{equation*}\label{eq:curve}
\begin{tikzpicture}
\draw[->] (0,0)--(9,0)node[right]{$t$};
\draw[->] (0,0)--(0,5)node[left]{$r$};
\draw(8,-.1)node[below]{$1$}--+(0,.2);
\draw(7,-.1)node[below]{$t_0$}--+(0,.2);
\draw(-.1,4)node[left]{$r_0$}--+(.2,0);
\draw(-.1,.5)node[left]{$\epsilon$}--+(.2,0);
\draw[line width=2pt](0,5)--(0,4)..controls (0,2) and(2,.5)..node[above right]{$\gamma$}(7,.5)--(8,.5);
\end{tikzpicture}
\end{equation*}
The curve contains the points on the positive $r$\nb-axis for $r>r_0$, and the horizontal line segment $\{(t,\epsilon)\mid t_0\le t\le 1\}$. 
Such a curve determines a hypersurface $N$ of the product $M\times [0,1]$, defined by 
\[
N=\{(x,t)\in M\times[0,1]\mid (\dist(x,p),t)\in \gamma\},
\]
where $\dist(x,p)$ is the distance between the points $x$ and  $p$. Choosing $r_0$ to be the injectivity radius of $g$ guarantees that $N$ is a smooth hypersurface of $M\times[0,1]$. The product metric on $M\times[0,1]$ induces a Riemannian metric on $N$, such that the  boundary $\p N$ is isometric to the ambient sphere $S^{n-1}_\epsilon(p)$ of radius $\epsilon$ around the point $p\in M$; moreover, the metric is a product metric near the boundary.  

The key point is that a careful choice of the curve $\gamma$ guarantees that the positive contributions to the scalar curvature of $N$ coming from the directions tangent to the $(n-1)$\nb-spheres,  given by the intersection of $N$ with $M\times \{t\}$ for fixed  $t\in (0,1]$,  dominates the negative contribution coming from the orthogonal direction. After an additional deformation of the metric in a small neighborhood of $\p N$, this is  a \pscm\ on $N$ which is the standard round metric on $\p N=S^{n-1}$, as well as a product metric near the boundary. 

\medskip

Next we recall the what a $k$\nb-surgery is, and outline the proof of the Surgery Theorem \ref{thm:surgery}. Let $M$ be a manifold of dimension $n$ and assume that there is an embedding of $S^k\times D^{n-k}$ in $M$. Then the complement $M\setminus (S^k\times \mathring D^{n-k})$ is a manifold with boundary $S^k\times S^{n-k-1}$, which can be glued with $D^{k+1}\times S^{n-k-1}$ along their common boundary to obtain a new $n$\nb-manifold 
\begin{equation}\label{eq:surgery}
\wh M:=M\setminus (S^k\times \mathring D^{n-k})
\cup_{S^k\times S^{n-k-1}}(D^{k+1}\times S^{n-k-1}).
\end{equation}
We say that $\wh M$ is obtained from $M$ by a {\em $k$\nb-surgery}  (or by a {\em surgery of codimension $n-k$}). For example, the connected sum $M_1\# M_2$ of two $n$\nb-manifolds is obtained by a $0$\nb-surgery from the disjoint union $M_1\amalg M_2$. 

The proof of the Surgery Theorem follows the same strategy as in the connected sum case: the \pscm\ on $\wh M$ is obtained by constructing \pscm s on the two pieces, $D^{k+1}\times S^{n-k-1}$ and $M\setminus (S^k\times \mathring D^{n-k})$, which agree on the common boundary $S^k\times S^{n-k-1}$, and which are product metrics near the boundary. On $S^k\times \mathring D^{n-k}$, this is the product metric of the standard round metric on $S^k$, and the hemisphere metric on $D^{n-k}\subset S^{n-k}$ (slightly deformed in order to make it a product metric near the boundary). On $M\setminus (S^k\times \mathring D^{n-k})$, it is a modification $g'$ of the given \pscm\ $g$ on $M$. It is obtained by generalizing the technique used above. The curve $\gamma$ as above determines a hypersurface $N\subset M\times [0,1]$ given by
\[
N=\left\{(x,t)\in M\times[0,1]\mid (\dist(x,S^k),t)\in \gamma\right\},
\]
where $\dist(x,S^k)$ is distance of $x$ from the embedded $k$\nb-sphere $S^k\subset S^k\times D^{n-k}\subset M$ that we do surgery on. The intersection of $N$ with $M\times\{t\}$ for  fixed $t\in (0,1]$ consists of the points in $M$ that have distance $\gamma(t)$ from $S^k$; this is a fiber bundle over $S^k$ whose fibers are spheres of dimension $n-k-1$. So the codimension restriction $n-k\ge 3$ guarantees that the curvature in the direction tangent to these sphere has a positive contribution to the scalar curvature of the hypersurface. As in the special case, a careful choice of $\gamma$ guarantees that the scalar curvature of $N$ is positive, and an additional modification of the metric near the boundary ensures that its  restriction  to $\p N=S^k\times S^{n-k-1}$ is the standard product metric.

%%%%%%%%%%%%%%%%%%%%%%%%%%%%%%%%%%%%%%%%%
\subsection{Bordism results}
The goal of this section is to outline how the Surgery Theorem \ref{thm:surgery} is used to prove the Bordism Theorem \ref{thm:bordism}, which we restate here for the convenience of the reader. 

\begin{thm}{\bf (Bordism Theorem).}\label{thm:bordism2}
 Let $M$ be a closed connected $n$\nb-manifold, $n\ge 5$, with fundamental group $\pi$, and let  $u\colon M\to B\pi$  be the classifying map of the universal covering $\wt M\to M$. 
\begin{enumerate}[label=\normalfont(\roman*),topsep=2pt,itemsep=-2pt]
\item If $M$ is spin, then it carries a \pscm\ \Iff  $[M,u]\in \Omega_n^{\spin,+}(B\pi)$.
\item If $M$ is oriented and $\wt M$ is non-spin, then $M$ admits a \pscm\ \Iff $[M,u]\in \Omega_n^{\SO,+}(B\pi)$. 
\end{enumerate}
\end{thm}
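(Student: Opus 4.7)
\emph{Approach.} The ``only if'' direction is immediate from the definition: if $M$ carries a \pscm\ then $[M,u]\in\Omega_n^{G,+}(B\pi)$ by design. For the converse, I would assume $(M,u)$ is $G$-bordant over $B\pi$ to a pair $(M',u')$ in which $M'$ admits a \pscm\ $g'$, and let $(W^{n+1},F)$ be such a cobordism. My plan is to transform $M'$ into $M$ through a finite sequence of surgeries, each of codimension $\ge 3$, and then apply the Surgery Theorem~\ref{thm:surgery} inductively along this sequence in order to propagate the \pscm\ from $M'$ to $M$.

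\emph{Morse-theoretic reduction.} Pick a Morse function $f\colon W\to[0,1]$ with $f^{-1}(0)=M'$ and $f^{-1}(1)=M$. Crossing a critical value of index $k$ effects a $(k-1)$-surgery of codimension $n-k+1$ on the intermediate level set, so the Surgery Theorem applies at every stage exactly when every critical index satisfies $k\le n-2$. Equivalently, after reversing $f$, the cobordism $W$ should admit a handle decomposition relative to $M$ with handles only of index $\ge 3$; so the goal becomes the elimination of all dual $0$-, $1$-, and $2$-handles.

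\emph{Killing low-index handles.} First I would perform preparatory interior surgeries on $W$ itself: $0$-surgeries of codimension $n+1\ge 6$ to render $W$ connected, then $1$-surgeries of codimension $n\ge 5$ to arrange $\pi_1(W)\cong\pi$. These surgeries leave the boundary untouched and, being of high codimension, preserve the $G$-structure. Because $u$ classifies the universal cover, the inclusion $M\hookrightarrow W$ now induces an isomorphism on $\pi_1$, so any dual $1$-handle has an attaching $0$-sphere whose two points are connectable in $M$; the Whitney trick (available since $n\ge 5$) then produces embedded cancelling discs and disposes of all dual $0$- and $1$-handles.

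\emph{The main obstacle.} The hard part will be the remaining dual $2$-handles. Trading such a handle for a $3$-handle requires the attaching circle to bound an embedded $2$-disc in $M$ whose normal framing is compatible with the given $G$-structure. The ambient dimension $n\ge 5$ furnishes the embedded disc via Whitney, but the framing obstruction is more delicate: it is detected by a $w_2$-class on the disc's normal bundle. In case~(i), where $M$ is spin, this obstruction vanishes, so the trade succeeds and preserves the spin structure. In case~(ii), the hypothesis that $\wt M$ is non-spin supplies an embedded $2$-sphere in $M$ (pushed down from $\wt M$) with nontrivial normal $w_2$; tubing any obstructed attaching circle to this sphere corrects the framing and enables the same trading. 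This is precisely where the spin/non-spin dichotomy becomes essential, and it explains the bifurcation into cases~(i) and~(ii) of the theorem.
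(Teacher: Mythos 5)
Your proposal is correct and follows the same overall strategy as the paper: reduce to the Surgery Theorem~\ref{thm:surgery} via a Morse function on the bordism $W$, after first modifying $W$ so that only critical points of index $\ge 3$ (read from the $M$ side) remain, with the spin/non-spin dichotomy entering exactly where you place it, at the $2$\nb-dimensional stage. The packaging differs in one respect: the paper does all the work by surgeries in the interior of $W$ --- it arranges that $F\colon W\to B\pi$ (case (i)) resp.\ $F\times c^{TW}\colon W\to B\pi\times B\SO$ (case (ii)) is a $3$\nb-equivalence, deduces that $i^M\colon M\hookrightarrow W$ is a $2$\nb-equivalence, and then invokes the standard handle-theoretic fact that such a bordism admits a Morse function without critical points of index $\le 2$ --- whereas you eliminate the low-index handles one at a time by handle trading. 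Both routes confront the same issue: disposing of a $2$\nb-dimensional class requires an embedded $2$\nb-sphere in $W$ with trivial normal bundle, the obstruction being the evaluation of $w_2(TW)$ on it; this vanishes identically when $W$ is spin, and is corrected by tubing with a sphere in $M$ detecting $w_2(TM)$ when $\wt M$ is non-spin, which is precisely your mechanism. Three small imprecisions, none of which affects validity: the $0$\nb- and $1$\nb-handles are removed by elementary cancellation and by trading using $\pi_1(M)\cong\pi_1(W)$, not by the Whitney trick (which cancels handles of consecutive middle indices via algebraic intersection numbers); the standard trade converts a $2$\nb-handle into a $4$\nb-handle, not a $3$\nb-handle (harmless, since any index $\ge 3$ is acceptable); and the framing obstruction is $w_2(TW)$ evaluated on the closed $2$\nb-sphere formed by the bounding disc in $M$ together with the core of the handle, not a class on the disc alone, whose normal bundle is of course trivial.
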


One of the implications of the theorem is tautological: if $M$ is a closed $n$\nb-manifold with a $G$\nb-structure (i.e., an orientation if $G=\SO$ or a spin structure if $G=\spin$) and $M$ carries a \pscm, then the bordism class $[M,u]\in \Omega^G_n(B\pi)$ belongs to the subgroup $\Omega^{G,+}_n(B\pi)$, which  by definition consists of bordisms classes representable by pairs $(N,f\colon N\to B\pi)$, where $N$ carries a \pscm. 

The non-trivial statement is that $[M,u]\in \Omega_n^{G,+}(B\pi)$ implies that $M$ carries a \pscm. In other words, if $(M,u)$ is {\em $G$\nb-bordant} to a pair $(N,f)$ where $N$ carries a \pscm, then $M$ itself carries a \pscm. Unwinding the assumption that $(M,u)$ is $G$\nb-bordant to $(N,f)$, it means that there is a pair $(W,F)$, where 
\begin{itemize}[topsep=2pt,itemsep=-2pt]
\item $W$ is a $G$\nb-bordism between $M$ and $N$, i.e., a manifold of dimension $n+1$ equipped with a $G$\nb-structure whose boundary $\p W$ is the disjoint union of the $G$\nb-manifolds $M$ and $-N$, where $-N$ is the manifold $N$, equipped with the opposite orientation/spin structure. 
\item $F\colon W\to B\pi$ is a map which makes the following diagram commutative:
\begin{equation}\label{eq:bordism}
\begin{tikzcd}
M\ar[r,hook,"{i^M}"]\ar[rd,"u"']
&W\ar[d,"F"]
&\ar[l,hook',"{i^N}"'] N\ar[ld,"f"]\\
&B\pi&
\end{tikzcd}
\end{equation}
\end{itemize}
The idea for constructing a \pscm\ on $M$ is to show that $M$ can be constructed by a sequence of surgeries starting from $N$, and to use the Surgery Theorem \ref{thm:surgery} to propagate the given \pscm\ on $N$ to one on $M$. Morse theory shows that $M$ is obtained by a sequence of surgeries from $N$: pick a Morse function $g\colon W\to [0,1]$ on $W$ with $g_{|M}\equiv 0$ and $g_{|N}\equiv 1$ and consider the topology of the level sets $W_t=g^{-1}(t)$ for $t\in [0,1]$:
\begin{itemize}[topsep=2pt,itemsep=-2pt]
\item  If there is no critical value of $g$ in some interval $[t,t']$, then $W_{t}$ is diffeomorphic to $W_{t'}$.
\item If there is one critical point $x$ of index $i$ with $g(x)\in (t,t')$, then $W_{t}$ is obtained from $W_{t'}$ by a surgery of codimension $i$.
\end{itemize}
In particular, if all critical points have different values, which can be arranged for, then $M=W_0$ can be obtained from $N=W_1$ by a sequence of surgeries. Moreover, all these surgeries have codimension $\ge 3$ as required by the Surgery Theorey \ref{thm:surgery}, provided the Morse function $g$  has {\em no critical points of index $\le 2$}. 

A Morse function $g$ allows to calculate the relative homology groups $H_*(W,M)$ (with integer coefficients)  via the Morse chain complex associated to $g$. That chain complex has one copy of $\Z$ in degree $i$ for any critical point of index $i$, and hence the vanishing of the groups $H_k(W,M)$ for $0\le k\le 2$ is a necessary condition for the existence of a Morse function $g$ without critical points of index $0\le i\le 2$. A sufficient condition is that the inclusion map $i^M\colon M\into W$ is a $2$\nb-equivalence, i.e., the induced map on homotopy  $i^M_*\colon \pi_k(M)\to \pi_k(W)$ is an isomorphism for $k=0,1$ and is surjective for $k=2$. 

There is no reason that for the given bordism $W$ the inclusion map $i^M\colon M\into W$ is a $2$\nb-equivalence, but we claim that by surgeries in the interior of $W$ (i.e., without affecting the boundary $\p W=M\amalg N$), we can modify the bordism $(W,F\colon W\to B\pi)$ such that 
\begin{enumerate}[label=\normalfont(\roman*),topsep=2pt,itemsep=-2pt]
\item $F\colon W\ra B\pi$ is a $3$\nb-equivalence if $G=\spin$, and
\item $F\times c^{TW}\colon W\ra B\pi\times B\SO$ is a $3$\nb-equivalence if $G=\SO$. \end{enumerate}
Here $c^{TW}$ is a classifying map of stable tangent bundle of the oriented manifold $W$. 

Assuming the claim above, let us argue that the inclusion map $i^M\colon M\into W$ to the (modified) bordism $W$ is a $2$\nb-equivalence. The classifying map $u\colon M\to B\pi$ of the universal covering of $\wt M$ is always a $2$\nb-equivalence. Hence the fact that $F$ is a $3$\nb-equivalence in the case $G=\spin$ implies that $i^M$ is a $2$\nb-equivalence by the commutative diagram \eqref{eq:bordism}. In the case $G=\SO$, we instead look the commutative (up to homotopy) diagram
\[
\begin{tikzcd}
M\ar[r,hook,"{i^M}"]\ar[rd,"u\times c^{TM}"']
&W\ar[d,"F\times c^{TW}"]\\
&B\pi\times B\SO&
\end{tikzcd}
\]
The condition that the universal covering $\wt M$ of $M$ does not admit a spin structure implies that the classifying map $c^{TM}\colon M\to B\SO$ of the stable tangent bundle of $M$ induces a surjection $c^{TM}_*\colon \pi_2(M)\to \pi_2(B\SO)=\Z/2$. Hence the map $u\times c^{TM}$ is a $2$\nb-equivalence, and since $F\times c^{TW}$ is a $3$\nb-equivalence, it follows that $i^M$ is a $2$\nb-equivalence. 

To arrange for conditions (i) resp.\ (ii) by surgery on $W$, we can do $0$\nb-surgeries to make $W$ connected which ensures that the map $F_*\colon \pi_k(W)\to \pi_k(B\pi)$ is an isomorphism for $k=0$. For $k=1$, the map $F_*$ is surjective (since $u_*\colon\pi_1(M)\to \pi_1(B\pi)$ is an isomorphism). Modifying $W$ by surgeries of embedded circles which generate the kernel of $F_*$ ensures that $F_*$ is an isomorphism for $k=1$. Similarly, if $W$ is spin, the elements of $\pi_2(W)$  can be represented by embedded $2$\nb-spheres with trivial normal bundle, allowing to do surgery on these spheres to achieve $\pi_2(W)=0$. For $G=\SO$, only the elements in the kernel of $c^{TW}_*\colon \pi_2(W)\to \pi_2(B\SO)$ can be represented by embedded $2$\nb-spheres with trivial normal bundle; doing surgery on those makes the kernel of $c_*^{TW}$ trivial, and hence $F\times c^{TW}$ is a $3$\nb-equivalence.

%%%%%%%%%%%%%%%%%%%%%%%%%%%%%%%%%%%%
\subsection{Simply connected manifolds with \pscm s}\label{ssec:sconn}
The goal of this section is to outline the proofs of Corollary \ref{cor:GL} and Theorem \ref{thm:stolz} which characterize those  simply connected closed manifolds of dimension $n\ge 5$ which carry \pscm s (these are restated as Theorem \ref{thm:scSO} resp.\ Theorem \ref{thm:stolz2} below). 
The Bordism Theorem  \ref{thm:bordism2} shows this amounts to determining the subgroup $\Omega^{G,+}_n\subset  \Omega^{G}_n$, $G=\SO,\Spin$ of the oriented (resp.\ spin) bordism group represented by manifolds with \pscm s. 

Based on prior work of Thom, Milnor and Dold, the calculation of the oriented bordism groups $\Omega^\SO_n$ was completed by Wall \cite{Wa1960}. In fact, he determined the structure of the $\Z$\nb-graded bordism ring
\[
\Omega_*^\SO:=\bigoplus_{n=0}^\infty\Omega_n^\SO,
\]
whose multiplication is given by the cartesian product of manifolds, and provided a list of   explicit closed oriented manifolds whose bordism classes multiplicatively generate this ring.  Gromov and Lawson noticed that {\em all} of these manifolds admit \pscm s, since each one of them can be identified with the total space of a fiber bundle $E\to N$ whose fibers are complex projective spaces, and whose structure group is the isometry group of the standard \pscm\ on the complex projective space (see Observation \ref{ob:fib}). By the Bordism Theorem this implies the following result.  

\begin{thm}\label{thm:scSO} {\bf (Gromov-Lawson, \cite[Cor.\ C]{GL1980}).}
Every closed simply-connected $n$-manifold, $n\ge 5$, which is not spin, carries a metric of positive scalar curvature.
\end{thm}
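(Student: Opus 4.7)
My plan is to combine the Bordism Theorem \ref{thm:bordism2} with Wall's explicit description of generators of the oriented bordism ring. Since $M$ is simply connected, the universal cover $\wt M$ coincides with $M$ (which is non-spin by hypothesis) and the classifying space $B\pi$ is a point, so Theorem \ref{thm:bordism2}(ii) reduces the question to whether $[M]\in \Omega_n^{\SO}$ lies in the subgroup $\Omega_n^{\SO,+}$. The strategy is thus to prove the stronger statement $\Omega_*^{\SO,+}=\Omega_*^{\SO}$ as graded groups, by showing that $\Omega_*^{\SO,+}$ is an ideal in $\Omega_*^{\SO}$ that contains a set of ring generators.

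For the ideal property, closure under addition and inverses is immediate, since a disjoint union of \pscm\ manifolds admits a component-wise \pscm\ and reversing orientation does not affect the scalar curvature. Closure under multiplication by an arbitrary class in $\Omega_*^{\SO}$ follows from Observation \ref{ob:fib}(1): given a closed manifold with a \pscm\ and any closed oriented manifold $N$, rescaling the given metric on the first factor by a small positive constant produces a \pscm\ on the product. To exhibit generators inside this ideal, I would invoke Wall's theorem, which provides an explicit list of closed oriented manifolds generating $\Omega_*^{\SO}$ as a ring, each of which is the total space of a fiber bundle whose fiber is a complex projective space $\CP^k$ and whose structure group lies in the isometry group of the Fubini-Study metric. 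Since $\CP^k$ equipped with the Fubini-Study metric has \psc, Observation \ref{ob:fib}(2) shows that every Wall generator admits a \pscm, so its bordism class lies in $\Omega_*^{\SO,+}$. Combining this with the ideal property yields $\Omega_*^{\SO,+}=\Omega_*^{\SO}$, and the theorem now follows from Theorem \ref{thm:bordism2}(ii).

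The principal obstacle in this argument is the nontrivial geometric content behind Wall's list: one expects only abstract algebraic generators of $\Omega_*^{\SO}$, and it is not obvious a priori that they can be chosen to admit explicit projective-bundle descriptions with structure group in the Fubini-Study isometry group. Once this identification is granted, the rest is formal: the Bordism Theorem supplies the topological reduction, and the shrinking trick behind Observation \ref{ob:fib} supplies the metric construction on the generators. A subsidiary technical point worth noting is that one needs $n\ge 5$ both to be in the range of validity of Theorem \ref{thm:bordism2} and to ensure enough room to represent every bordism class in dimension $n$ by a (disjoint union of) product of Wall generators of the appropriate total dimension.
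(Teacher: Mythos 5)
Your argument is correct and follows the paper's proof essentially verbatim: reduce via the Bordism Theorem \ref{thm:bordism2}(ii) to showing $\Omega_n^{\SO,+}=\Omega_n^{\SO}$, and obtain this from the fact that Wall's multiplicative generators are $\CP^k$-bundles with structure group in the isometry group of the Fubini-Study metric, together with the ideal property supplied by the shrinking trick of Observation \ref{ob:fib}. (One small quibble: the hypothesis $n\ge 5$ is needed only for the Bordism Theorem; expressing a class in $\Omega_n^{\SO}$ in terms of the ring generators is purely algebraic and imposes no dimension restriction.)
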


By contrast, the Lichnerowicz-Hitchin Theorem \ref{thm:LH} shows that the vanishing of the index invariant $\alpha(M)\in \KO_n$ is a necessary condition for a spin $n$\nb-manifold $M$ to carry a \pscm. In \cite{GL1980} Gromov and Lawson conjectured the following result, which was proved by the author \cite[Thm.\ A]{St1992}.

\begin{thm}\label{thm:stolz2} Let $M$ be a simply-connected spin manifold of dimension $n\ge 5$. Then $M$ carries a \pscm\ \Iff $\alpha(M)=0$.
\end{thm}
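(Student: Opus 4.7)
The necessity is the Lichnerowicz-Hitchin Theorem \ref{thm:LH}. For sufficiency, let $M$ be a simply-connected closed spin $n$-manifold with $n\ge 5$ and $\alpha(M)=0$. By the Bordism Theorem \ref{thm:bordism2}(i), applied with trivial fundamental group, it suffices to prove $[M]\in \Omega_n^{\spin,+}$, i.e., that $M$ is spin bordant to a closed spin manifold which carries a \pscm.

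My plan is to establish the following bordism statement: every closed spin $n$-manifold with vanishing $\alpha$-invariant is spin bordant to the total space $E$ of a smooth fiber bundle with fiber $\HP^2$ whose structure group is the isometry group $\PSp(3)$ of the standard Fubini-Study \pscm\ on $\HP^2$. Granting this, Observation \ref{ob:fib}(2) immediately supplies $E$ with a \pscm, so $[M]=[E]\in \Omega_n^{\spin,+}$ and Theorem \ref{thm:stolz2} follows from the Bordism Theorem.

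To prove the bordism statement, I would apply Pontryagin-Thom to identify $\Omega_*^{\spin}\cong \pi_*(M\Spin)$. Bordism classes of $\HP^2$-bundles of the above type form the image of a transfer-type homomorphism
\[
\tau_*\colon \pi_*\bigl(M\Spin\wedge B\PSp(3)_+\bigr)\longrightarrow \pi_*(M\Spin),
\]
induced by the Thom spectrum of the vertical tangent bundle of the universal $\HP^2$-bundle $E\PSp(3)\times_{\PSp(3)}\HP^2\to B\PSp(3)$ (after refining $\PSp(3)$ to the subgroup preserving the spin structure on $\HP^2$). Since $\HP^2$ carries a \pscm, Observation \ref{ob:fib}(2) together with Theorem \ref{thm:LH} gives $\operatorname{im}\tau_*\subseteq \ker\alpha$, where $\alpha\colon \pi_*(M\Spin)\to \KO_*$ is the Atiyah-Bott-Shapiro orientation. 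The substantive content of the bordism statement is therefore the reverse inclusion $\ker\alpha\subseteq \operatorname{im}\tau_*$.

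The principal obstacle, and the technical heart of the argument, is this last inclusion, equivalent to the assertion that the induced map on the cofiber of $\tau$ detects $\alpha$ faithfully. I would localize one prime at a time. After inverting $2$ the spectrum $M\Spin$ splits as a wedge of $\ko[\tfrac12]$-module spectra and the required surjectivity reduces to an accessible rational Pontryagin-number calculation. The serious work is at the prime $2$: here I would invoke the Anderson-Brown-Peterson splitting \cite{ABP1967} of $M\Spin_{(2)}$ as a wedge of suspensions of $\ko$, $\ko\bra{2}$, and mod-$2$ Eilenberg-MacLane spectra, and then run the Adams spectral sequence to show that $\tau$ surjects onto every ABP summand \emph{other} than the distinguished $\ko$-summand through which $\alpha$ factors. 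This requires producing enough $2$-primary classes in $H^*(B\PSp(3);\F)$ whose Thom-spectrum images realize, in cohomology, the generating Steenrod-module patterns of the non-$\ko$ ABP summands of $M\Spin$. Once that delicate cohomological input is in hand, standard manipulations with Adams differentials and extensions identify the cofiber of $\tau$ with the $\ko$-summand itself, mapping isomorphically to $\ko$, and therefore $\ker\alpha=\operatorname{im}\tau_*$, finishing the proof.
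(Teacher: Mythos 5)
Your proposal is correct and follows essentially the same route as the paper: reduce via the Bordism Theorem to the statement that every class in $\ker(\alpha\colon\Omega_n^{\spin}\to\KO_n)$ is represented by the total space of an $\HP^2$\nb-bundle with structure group $\PSp(3)$ (the paper's Theorem \ref{thm:transfer}), prove it with $2$ inverted by a characteristic-number argument on polynomial generators and at the prime $2$ by showing the $\HP^2$\nb-transfer surjects onto the complement of the $\ko$\nb-summand of $\MSpin_{(2)}$ via the Adams spectral sequence. The paper phrases the $2$\nb-local step as surjectivity of the lift $\wh T$ to the homotopy fiber of $U^{\spin}\colon\MSpin\to\ko$ rather than through the cofiber of the transfer, but this is the same argument.
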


By the Lichnerowicz-Hitchin Theorem \ref{thm:LH}, there is an inclusion 
\begin{equation}\label{eq:spin_+}
\Omega^{\spin,+}_n\into \ker(\alpha\colon \Omega_n^\spin\ra \KO_n).
\end{equation}
Here  $\Omega_n^{\spin,+}\subset \Omega^\spin_n$ is the subgroup of bordism classes represented by manifolds carrying \pscm s, and $\alpha$ is the well-defined homomorphism given by sending $[M]\in \Omega^\spin_n$ to the index invariant $\alpha(M)\in \KO_n$. 
By the Bordism Theorem  \ref{thm:bordism2}(2), the theorem above is equivalent to showing that the inclusion \eqref{eq:spin_+} is an equality. Gromov and Lawson showed that the inclusion map is an isomorphism rationally \cite[Cor.\ B]{GL1980}, and  Miyazaki proved it after tensoring with $\Z[\frac 12]$ \cite{Miy1985}. Rosenberg showed equality for $n\le 23$ by exhibiting explicit generators for the kernel of $\alpha$ in that range \cite[Thm.\ 1.1]{Ro1986III}.  

The spin bordism groups $\Omega^\spin_n$ have been completely determined by Anderson, Brown and Peterson \cite{ABP1967}, but unlike for the oriented bordism groups, still today {\em no explicit manifolds} are known that generate them. The proof of  Theorem \ref{thm:stolz2} is based on Observation \ref{ob:fib}(2) that the total space of a fiber bundle $E\to N$ carries a \pscm, provided the fiber comes equipped with a \pscm, and the structure group is the isometry group of the fiber. A good candidate for the fiber is the quaternionic projective plane $\HP^2$, since its spin bordism class $[\HP^2]\in \Omega_8^\spin$ is a generator of the kernel of $\alpha$ in degree $8$, which is the first degree in which the kernel of $ \alpha$ is non-trivial. The standard Riemannian metric on $\HP^2$ has positive scalar curvature; its isometry group is the projective-symplectic group $\PSp(3)$. Hence Theorem \ref{thm:stolz2} is a consequence of the following completely topological result. 

\begin{thm}\label{thm:transfer} Every element in the kernel of $\alpha\colon \Omega^\spin_n\to \KO_n$ is represented by a total space of a fiber bundle with fiber $\HP^2$ and structure group $H=\PSp(3)$. 
\end{thm}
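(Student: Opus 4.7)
The plan is to recast the theorem in stable homotopy theory via Pontryagin-Thom, and to analyse a cofiber sequence of Thom spectra using the Anderson-Brown-Peterson splitting of $\MSpin$, with the main work occurring at the prime $2$.

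First, I set up the transfer. Let $E = EH \times_H \HP^2 \to BH$ be the universal $\HP^2$-bundle with $H = \PSp(3)$. Sending $(N^n, f\colon N \to BH)$ to the total space $f^*E$ defines a homomorphism
\[
T\colon \Omega_n^\spin(BH) \longrightarrow \Omega_{n+8}^\spin, \qquad [N, f] \longmapsto [f^*E].
\]
Since $H$ acts on $\HP^2$ by isometries of the standard \pscm, Observation \ref{ob:fib}(2) produces a \pscm\ on $f^*E$, and Theorem \ref{thm:LH} then gives $\im T \subset \ker \alpha$. The content of the theorem is the reverse inclusion $\ker \alpha \subset \im T$.

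Second, via Pontryagin-Thom the transfer $T$ is realized by a map of spectra $T\colon \MSpin \wedge (BH)_+ \to \Sigma^{8}\MSpin$, obtained by fiber integration along the vertical tangent bundle of $E \to BH$, and $\alpha$ is induced by the Atiyah-Bott-Shapiro orientation $\alpha_{\mathrm{sp}}\colon \MSpin \to \ko$. Writing $C$ for the cofiber of $T$, the relation $\alpha_{\mathrm{sp}} \circ T \simeq 0$ allows $\alpha_{\mathrm{sp}}$ to factor through some $\beta\colon C \to \ko$. The theorem is then equivalent to $\beta_*$ being injective on homotopy groups, which I would verify one prime at a time.

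Third, and most importantly, I would establish the $2$-local identification of $C$. The Anderson-Brown-Peterson theorem splits $\MSpin_{(2)}$ as a wedge of suspensions of $\ko$, $\ko\langle 2\rangle$, and $H\F_2$, and reduces questions about $\MSpin_{(2)}$-homotopy to mod-$2$ Steenrod-algebra cohomology. The plan is to compute $T^*$ on $H^*(-; \F_2)$ using explicit knowledge of $H^*(B\PSp(3); \F_2)$ and the characteristic classes of the vertical tangent bundle of $E$, and to show that
\[
H^*(C; \F_2) \cong H^*(\ko; \F_2) \oplus F
\]
as modules over the Steenrod algebra $A$, where $F$ is a direct sum of free $A(1)$-modules. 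The ABP techniques then lift this splitting to a $2$-local stable equivalence $C \simeq_{(2)} \ko \vee \bigvee \Sigma^{d_i}H\F_2$, and a unit check in $\ko^0(\MSpin)$ identifies the composite $\MSpin \to C \to \ko$ with $\alpha_{\mathrm{sp}}$. Odd-primary surjectivity is supplied by the earlier arguments of Gromov-Lawson (rationally) and Miyazaki (after inverting $2$). The truly hard step is the Steenrod-module computation of $T^*$: everything downstream rests on explicit control of the action of $A$ on $H^*(B\PSp(3); \F_2)$ and of the Thom class of the vertical $8$-plane bundle, and on packaging the result as the stated direct sum decomposition.
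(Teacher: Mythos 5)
Your overall strategy---Pontryagin--Thom, a spectrum-level transfer out of $\MSpin\wedge BH_+$, and a $2$-primary analysis in the style of Anderson--Brown--Peterson---is the right one, but the reduction you make in the second step is incorrect, and the error propagates. You pass to the cofiber $C$ of the transfer and assert that the theorem is equivalent to injectivity of $\beta_*\colon \pi_*(C)\to\pi_*(\ko)$. From the cofiber sequence one has $\im T_*=\ker\bigl(j_*\colon \pi_n(\MSpin)\to\pi_n(C)\bigr)$, so the theorem is equivalent only to the vanishing of $\ker\beta_*\cap\im j_*$; full injectivity of $\beta_*$ is far stronger, and it is false. Indeed the connecting map sends $\pi_n(C)$ onto $\ker\bigl(T_*\colon\pi_{n-1}(\MSpin\wedge\Sigma^8BH_+)\to\pi_{n-1}(\MSpin)\bigr)$, and already rationally $\pi_*(\MSpin\wedge\Sigma^8BH_+)\otimes\Q\cong H_*(BH;\Q)\otimes\Omega^{\spin}_{*-8}\otimes\Q$ is vastly larger than $\Omega_*^\spin\otimes\Q$, so this kernel is huge while $\KO_n$ is at most $\Z$. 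For the same reason your proposed $2$-local identification $C\simeq_{(2)}\ko\vee\bigvee\Sigma^{d_i}H\F_2$ cannot hold. The fix is to dualize: replace the cofiber of $T$ by the homotopy fiber $\wh\MSpin$ of $U^\spin\colon\MSpin\to\ko$. Since $U^\spin\circ T\simeq 0$ (a point which itself requires an argument, via the Family Index Theorem), $T$ lifts to $\wh T\colon\MSpin\wedge\Sigma^8BH_+\to\wh\MSpin$, and the theorem becomes equivalent to \emph{surjectivity} of $\wh T_*$ on homotopy groups---a formulation in which the largeness of the source is harmless. This is exactly the route the paper takes.

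Two further gaps. First, at $p=2$ your plan to compute the transfer on mod $2$ cohomology from $H^*(B\PSp(3);\F_2)$ and the vertical Thom class is the right computation, but it must be packaged as the statement that $\wh T$ induces a \emph{split surjection of $A_*$-comodules} on $\Z/2$-homology; this gives surjectivity on Adams $E_2$-pages, and one must additionally rule out differentials in the Adams spectral sequence of the source before concluding surjectivity on $2$-local homotopy groups (\cite[Props.\ 1.3 and 1.5]{St1992}). Second, with $2$ inverted, citing Miyazaki only yields that $\ker\alpha\otimes\Z[\tfrac12]$ is represented by manifolds admitting \pscm s, not by total spaces of $\HP^2$-bundles, which is what the theorem asserts. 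The paper instead uses the characteristic-number computation of \cite{KrSt1993}: the polynomial generators of $\Omega_*^\spin\otimes\Z[\tfrac12]$ in degrees $\ge 8$ can be chosen to be $\HP^2$-bundles with structure group $\Sp(3)$, and since the image of $\Psi$ is an ideal containing them, it contains all of $\ker\alpha\otimes\Z[\tfrac12]$. (A minor slip: your spectrum-level transfer should land in $\Sigma^{-8}\MSpin$, equivalently be a map $\MSpin\wedge\Sigma^{8}BH_+\to\MSpin$, since the geometric transfer raises dimension by $8$.)
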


%%%%%%
\begin{comment}
As discussed below, the statement amounts to an equality of subgroups of $\Omega_n^\spin$ which  was proved  localized at $2$ (i.e., after tensoring by $\Z_{(2)}=\{\frac ab\in\Q\mid \text{$b$ prime to $2$}\}$) by the author in  \cite[Thm.\ B]{St1992}, and with $2$ inverted by Kreck and the author in \cite[Prop.\ 4.2]{KrSt1993}.
\end{comment}
%%%%%%%%%%%

Localized at the prime $2$, this was proved by the author \cite[Thm.\ B]{St1992}, and by Kreck and the author in \cite[Prop.\ 4.2]{KrSt1993} after inverting $2$. We explain below what this means, and outline the proof of this result in the rest of this section.

Every fiber bundle $E\to N$ with fiber $\HP^2$ and structure group $H=\PSp(3)$ is the pull-back of the {\em universal  $\HP^2$\nb-bundle} $EH\times_H\HP^2\to BH$ via some map $f\colon N\to BH$. It turns out that a spin structure on $N$ induces a spin structure on the total space $E=f^*(EH\times_H\HP^2)$, and hence we can define a homomorphism 
\[
\Psi\colon \Omega_{n-8}^\spin(BH)\ra \Omega^\spin_n
\]
by mapping the bordism class of $f\colon N\to BH$ to the bordism class of the total space of the pullback bundle $f^*(EH\times_H\HP^2)$. 

We note that the composition 
\begin{equation}\label{eq:exact}
\begin{tikzcd}
\Omega_{n-8}^\spin(BH)\ar[rr,"\Psi"]
&&\Omega_n^\spin\ar[rr,"\alpha"]
&&\KO_n
\end{tikzcd}
\end{equation}
is trivial, since the image of $\Psi$ is represented by total spaces of $\HP^2$\nb-bundles; these carry \pscm s and hence their $\alpha$\nb-invariant is trivial by the Lichnerowicz-Hitchin Theorem \ref{thm:LH}. Theorem \ref{thm:transfer} is equivalent to the statement $\ker\alpha=\im\Psi$, i.e., to the exactness of the sequence \eqref{eq:exact} at the middle term. 

To prove this, it suffices to show exactness {\em localized at the prime $2$}, i.e., after tensoring the sequence with $\Z_{(2)}$, and {\em with $2$ inverted}, i.e., after tensoring with $\Z[\tfrac 12]$. Here
\begin{equation}\label{eq:loc_inv}
\Z_{(2)}=\left\{\tfrac ab\in\Q\mid \text{$b$ prime to $2$}\right\}
\qquad\text{and}\qquad
\Z\left[\tfrac 12\right]=\left\{\tfrac ab\in\Q\mid \text{$b$ is a power of $2$}\right\}.
\end{equation}

The $\alpha$\nb-invariant is multiplicative, i.e.,   $\alpha(M\times N)=\alpha(M)\alpha(N)$ 
for closed spin manifolds $M$, $N$. Here the product $\alpha(M)\alpha(N)\in \KO_*$ is given by the tensor product of the Clifford modules representing these elements (see \eqref{eq:KO_Cliff}). Put another way, the $\alpha$\nb-invariant gives a homomorphism of graded rings
\[
 \Omega_*^\spin:=\bigoplus_{n\ge 0}\Omega_n^\spin
\ \overset{\alpha}\ra\ \ko_*:=\bigoplus_{n\ge 0}\KO_n.
\]
Explicitly, $\ko_*=\Z[\eta,\omega,\mu]/(2\eta,\eta^3,\omega^2-4\mu)$, where $\eta,\omega,\mu$ are elements of degree $1,4$, and $8$ respectively. In fact,
\[
\eta=\alpha(S^1)
\qquad
\omega=\alpha(K)
\qquad
\mu=\alpha(B),
\]
where $S^1$ is the circle with the non-bounding spin structure, $K$ is the Kummer surface,  a degree $4$ hypersurface in $\CP^3$ (see Example \ref{ex:hyp}), and $B$ is any closed spin $8$\nb-manifold with $\wh A(K)=1$. 

\begin{proof}[Proof of Theorem \ref{thm:transfer} with $2$ inverted]
The spin bordism ring with $2$ inverted, i.e., the ring $\Omega_*^\spin\otimes \Z[\frac 12]$ is the polynomial ring $\Z[\frac 12][x_4,x_8,\dots]$ with generators $x_{4k}$ of degree $4k$. The generator $x_4$ can be chosen to be the bordism class $[K]$ of the Kummer surface. By a characteristic class calculation, it was shown in \cite[Prop.\ 4.2]{KrSt1993} that for $i\ge 2$ there are manifolds $M^{4i}$ of dimension $4i$ which are 
$\HP^2$\nb-bundles over a closed spin manifolds with structure group $\Sp(3)$ such
that the bordism classes
\begin{equation}\label{eq:family}
[K^4],\ [M^8],\ [M^{12}],\dots
\qquad\text{are generators of the polynomial ring $\Omega^\spin_*\otimes\Z[\frac 12]$}.
\end{equation}
The classes $[M^{4i}]$ are in the kernel of $\alpha$, since these manifolds carry \pscm s, while $\alpha(K)=\omega$  is the generator of the ring $\ko_*\otimes \Z[\frac 12]=\Z[\frac 12][\omega]$. This shows that the kernel of $\alpha$ with $2$ inverted is the ideal generated by the elements $[M^{4i}]$, for $i=2,3,\dots$. As $\HP^2$\nb-bundles, these generators are in the image of $\Psi$, consisting of all bordism classes represented by total spaces of $\HP^2$\nb-bundles. Since the image of $\Psi$ is an ideal, it contains all of $\ker \alpha$, thus proving Theorem \ref{thm:transfer} with $2$ inverted. 
\end{proof}

\begin{proof}[Proof of Theorem \ref{thm:transfer} localized at $2$]
The proof at the prime $2$ is more involved, due to the intricate structure of spin bordism at the prime $2$. Like the computation of the bordism groups $\Omega^\SO_n$ and $\Omega^\spin_n$, the proof of this statement is based on the Pontryagin-Thom isomorphism which expresses bordism groups as homotopy groups of associated Thom spectra. More precisely, the groups   in the sequence \eqref{eq:exact} can be expressed as homotopy groups of spectra by means  of a commutative diagram 
\begin{equation}\label{eq:hmtp1}
\begin{tikzcd}
\Omega_{n-8}^\spin(BH)\ar[rr,"\Psi"]\ar[d,"\cong"]
&&\Omega_n^\spin\ar[rr,"\alpha"]\ar[d,"\cong"]
&&\KO_n\ar[d,"\cong"]\\
\pi_n(\MSpin\wedge\Sigma^8BH_+)\ar[rr,"{T_*}"]
&&\pi_n(\MSpin)\ar[rr,"{U^\spin_*}"]
&&\pi_n(\ko)\\
\end{tikzcd}
\end{equation}
Here 
\begin{itemize}
\item  $MSpin$ is the Thom spectrum whose $n$\nb-th space $\MSpin_n$ is the Thom space of the vector bundle over $\BSpin(n)$ given by the pullback  of the universal $n$\nb-dimensional vector bundle $\gamma^n\to BO(n)$. The middle isomorphism is given by the Pontryagin-Thom construction.
\item The left isomorphism is also given by the Pontryagin-Thom isomorphism 
\[
\Omega^\spin_{n-8}(BH)\cong \pi_{n-8}(\MSpin\wedge BH_+)
\]
 composed with the suspension isomorphism 
 \[
 \pi_{n-8}(\MSpin\wedge BH_+)\cong \pi_n(\MSpin\wedge\Sigma^8BH_+).
 \]
\item The map $T\colon \MSpin\wedge\Sigma^8BH_+\to \MSpin$ is a {\em transfer map} associated to the $\HP^2$\nb-bundle $EH\times_H\HP^2\to BH$ \cite[section 2]{St1992}. 
\item $\ko$ is the connective version of the real $K$\nb-theory spectrum, i.e., $\pi_n(\ko)\cong \KO_n$ for $n\ge 0$ and $\pi_n(\ko)=0$ for $n<0$. The spectrum map $U^\spin\colon \MSpin\to \ko$ is the homotopy theoretic incarnation of Atiyah's real $K$\nb-theory orientation class of spin vector bundles. 
\end{itemize}

The composition $U^\spin\circ T$ induces the trivial map on homotopy groups by the diagram above, and the vanishing of $\alpha\circ \Psi$ by the Lichnerowicz-Hitchin Theorem \ref{thm:LH}. In fact, the composition itself is zero homotopic as an argument based on the Family Index Theorem shows \cite[Prop.\ 1.1]{St1992}. This implies that up to homotopy $T$ factors through a map $\wh T$ from $\MSpin\wedge\Sigma^8BH_+$ to the homotopy fiber $\wh\MSpin$ of $U^\spin$. This yields the commutative diagram whose bottom row is exact.
\begin{equation}\label{eq:hmtp2}
\begin{tikzcd}
\pi_n(\MSpin\wedge\Sigma^8BH_+)\ar[rr,"{T_*}"]\ar[d,"{\wh T_*}"]
&&\pi_n(\MSpin)\ar[rr,"{U^\spin_*}"]\ar[d,equal]
&&\pi_n(\ko)\ar[d,equal]\\
\pi_n(\wh \MSpin)\ar[rr]
&&\pi_n(\MSpin)\ar[rr,"{U^\spin_*}"]
&&\pi_n(\ko)
\end{tikzcd}
\end{equation}
It follows that exactness at the middle term of the top row is equivalent to surjectivity of $\wh T_*$. 

To prove surjectivity of the map $\wh T_*$ localized at $2$, use is made of the mod $2$ Adams spectral sequence which for a spectrum $X$ converges to $\pi_*X\otimes\Z_{(2)}$, the homotopy groups of $X$ localized at $2$. Its $E_2$\nb-page is given by Ext-groups built from the homology $H_*(X;\Z/2)$ viewed as comodule over the dual Steenrod algebra $A_*$. The map $\wh T$ induces a map of Adams spectral sequences, which on the $E_2$\nb-page is determined by the map induced by $\wh T$ on $\Z/2$\nb-homology. A calculation shows that this homology map is a split surjection of $A_*$\nb-comodules \cite[Prop.\ 1.3]{St1992}, and hence the map induced by $\wh T$ on the $E_2$\nb-page is surjective. This does {\em not} imply that the map induced by $\wh T$ on the $E_\infty$\nb-page is surjective, since there could be non-trivial differentials in the domain spectral sequence. Fortunately, this is not the case \cite[Prop.\ 1.5]{St1992}, and so $\wh T$ does induce a surjection on the $E_\infty$\nb-page. This implies that the map induced by $\wh T$ is surjective on $2$\nb-local homotopy groups, which proves Theorem \ref{thm:transfer} localized at $2$. 
\end{proof}

%%%%%%%%%%%%%%%%%%%%%%%%%%%%%%%%%%%%%%
\subsection{Reduction to homology and $\ko$-homology}
\label{ssec:H_ko}

This section is an outline of the proof of Theorem \ref{thm:ko}, according to which a closed connected $n$\nb-manifold $M$, $n\ge 5$, with fundamental group $\pi$ carries a \pscm\ \Iff $u_*[M]^\spin\in \ko_n^+(B\pi)$ (if $M$ is spin) resp.\ $u_*[M]^\SO\in H_n^+(B\pi)$ (if $M$ is oriented and the universal covering $\wt M$ is non-spin). The proof is based on the Bordism Theorem \ref{thm:bordism2}, whose statement is completely analogous, but the condition for the existence of a \pscm\ is $[M,u]\in \Omega_n^{\spin,+}(B\pi)$ resp.\ $[M,u]\in \Omega_n^{\SO,+}(B\pi)$. These conditions are related by natural transformations of homology theories
\begin{equation}\label{eq:nat_trans}
U^\spin_*\colon \Omega_n^\spin(X)\ra \ko_n(X)
\qquad\qquad
U^\SO_*\colon \Omega_n^\SO(X)\ra H_n(X)
\end{equation}
given by sending a bordism class $[f\colon M\to X]\in \Omega^G_n(X)$ to $f_*[M]^G$ for $G=\spin,\SO$. By definition, $\ko_n^+(X)=U^\spin_*(\Omega_n^{\spin,+}(X))$ and $H_n^+(X)=U^\SO_*(\Omega_n^{\SO,+}(X))$, and hence to prove Theorem \ref{thm:ko}, it suffices to show
\begin{equation}\label{eq:ko_H}
\ker U^G_*\subset \Omega_*^{G,+}(X)
\qquad\text{for $G=\spin,\SO$}.
\end{equation}

The first step towards proving this is to show that the map  $U_*^G$ is part of an exact sequence, allowing us to think of the kernel $\ker U^G_*$ as the image of a map. This is achieved in an abstract way, by looking at the spectra representing these homology theories, as well as the spectrum maps inducing the natural transformations \eqref{eq:nat_trans}, and taking the homotopy fibers of these maps.

We recall that a spectrum $E$ determines  a (generalized) homology theory $E_*(X)$ by defining the {\em $n$-th $E$\nb-homology group} $E_n(X)$ of a topological space $X$ by $E_n(X):=\pi_n(E\wedge X_+)$. Conversely, every homology theory comes from a spectrum. For example, the Pontryagin-Thom construction yields isomorphisms
\begin{align*}
\Omega^G_n(X)&\cong \pi_n(\MG\wedge X_+)=\MG_n(X)
\qquad\text{for $G=\spin,\SO$}.
\end{align*}
This shows that homology theories $\Omega_n^\spin(X)$ (resp.\ $\Omega_n^\SO(X)$) are the homology theories associated to the Thom spectra $\MSpin$ resp.\ $\MSO$. Integral homology $H_n(X)$ is associated to the {\em integral Eilenberg Mac Lane spectrum $\HZ$} (which up to homotopy equivalence is determined by $\pi_0(\HZ)\cong \Z$ and $\pi_n(\HZ)=0$ for $n\ne 0$). The connective real $K$\nb-homology $\ko_n(X)$ is by definition the homology theory associated to the real connective $K$\nb-theory spectrum $\ko$. 

A map between spectra $U\colon E\to F$ determines a natural transformation 
$
 E_n(X)\overset{U_*}\to F_n(X)
$
given by 
\[
\begin{tikzcd}
E_n(X)=\pi_n(E\wedge X_+)\ar[rr,"{(U\wedge \id_X)_*}"]
&&\pi_n(F\wedge X_+)=F_n(X)
\end{tikzcd}.
\]
 The natural transformations \eqref{eq:nat_trans} are given by maps of spectra
$U^\spin\colon \MSpin\ra \ko$ resp.\ 
$U^\SO\colon \MSO\ra \HZ$. By taking the homotopy fibers of $U^\spin$ resp.\ $U^\SO$, we obtain homotopy fibrations
\[
\begin{tikzcd}
\wh {\MSpin}\ar[r,"{i^\spin}"]
&\MSpin\ar[r,"{U^\spin}"]
&\ko
\end{tikzcd}
\qquad\text{and}\qquad
\begin{tikzcd}
\wh {\MSO}\ar[r,"{i^\SO}"]
&\MSO\ar[r,"{U^\SO}"]
&\HZ
\end{tikzcd},
\]
and the associated long exact sequences of homology groups (coming from the long exact sequences of homotopy groups  of the fibrations obtained by smashing the  fibrations above with $X_+$). In particular, the kernel of $U^G_*$ is the image of $i^G_*$ for $G=\spin,\SO$, and hence \eqref{eq:nat_trans} is equivalent to
\begin{align} \label{eq:i^G}
&\text{image}\left(i^G_*\colon \wh\MG_n(X)\to \MG_n(X)\right)
\ \subseteq\ \Omega^{G,+}_n(X)
\qquad\text{for $G=\spin,\SO$}.
\end{align}
It suffices to prove  these containment relations localized at $2$  and with $2$ inverted (see \eqref{eq:loc_inv}). 
The method of proof for those two cases is quite different: 
\begin{itemize}
\item Localized at $2$, the proof is based on a stable homotopy theoretic understanding of the spectra $\wh\MSpin$, $\wh \MSO$, and relating the image of $i^\spin_*$ to $\HP^2$\nb-bundles.
\item With $2$ inverted, it  is based on a geometric interpretation of the groups $\wh \MG_*(X)$ as bordism groups of manifolds with additional structure. 
\end{itemize}

\begin{proof}[Proof of \eqref{eq:i^G} for $G=\spin$ localized at $2$] We recall from the proof of Theorem \ref{thm:transfer} that the transfer map $T$ factors in the form
\[
\begin{tikzcd}
&&\wh\MSpin\ar[d,"{i^\spin}"]\\
\MSpin\wedge\Sigma^8BH_+\ar[rr,"T"']\ar[rru,"{\wh T}"]
&&\MSpin,
\end{tikzcd}
\]
and that the key for the proof of Theorem \ref{thm:transfer} is the fact that the induced map $\wh T_*$ on homotopy groups is surjective (localized at $2$). The key fact needed for the proof here is the stronger statement that $\wh T$ is a {\em split surjection of spectra} localized at $2$ \cite[Prop.\ 8.3]{St1994}, the  main technical result of that paper.  It implies that the induced map
\[
\wh T_*\colon (\MSpin\wedge \Sigma^8BH_+)_n(X)\ra \wh \MSpin_n(X)
\]
is surjective (localized at $2$) for any space $X$. Hence the image of $i_*^\spin$ is contained in the image of $T_*\colon (\MSpin\wedge \Sigma^8BH_+)_n(X)\to \MSpin_n(X)=\Omega_n^\spin(X)$. Geometrically, the image of $T_*$ is represented by total spaces of $\HP^2$\nb-bundles. These admit \pscm s, and hence the image of $i_*^\spin$ is contained in $\Omega_n^{\spin,+}(X)$. 
\end{proof}

\begin{proof}[Proof of \eqref{eq:i^G} for $G=\SO$ localized at $2$] This case is much simpler. For the convenience of the reader we repeat the argument of \cite[proof of Thm.\ 4.11]{RS2001}). Localized at $2$ the spectra $\MSO$ and $\wh\MSO$ are Eilenberg Mac Lane spectra, which implies that localized at $2$,
\[
\MSO_n(X)\cong \bigoplus_{j\ge 0}H_{n-j}(X;\Omega_j^\SO)
\qquad\text{and}\qquad
\wh\MSO_n(X)\cong \bigoplus_{j> 0}H_{n-j}(X;\Omega_j^\SO).
\]
The summand $H_{n-j}(X;\Omega_j^\SO)\subset \Omega_n^\SO(X)$ is given by bordism classes of the form 
\[
[f\colon M^{n-j}\times P^j\to X],
\]
 where $M^{n-j}$, $P^j$ are oriented closed manifolds of the indicated dimension, and $f$ factors through $M$. Since every bordism class in $\Omega_j^\SO$ for $j>0$ can be represented by a manifold that carries a \pscm, (this is the key fact for the proof of Gromov-Lawson Theorem \ref{thm:scSO}), this shows that the image of $i_*^{\SO}$ is contained in $\Omega_n^{\SO,+}(X)$. 
\end{proof}

\begin{proof}[Proof of \eqref{eq:i^G} with $2$ inverted] F\"uhring has given 
a geometric interpretation of the homology groups $\wh \MSpin_n(X)$ and $\wh \MSO_n(X)$ with $2$ inverted in terms of  bordism classes of $n$\nb-dimensional closed $\sP$\nb-manifolds equipped with a map to $X$ \cite[]{Fu2013}. Here $\sP=\{P_1,P_2,\dots\}$ is a family of smooth closed manifolds, and a $\sP$\nb-manifold $M$ of dimension $n$ is a smooth $n$\nb-manifold equipped with an additional structure \cite[Def.\ 2.1]{Fu2013}. Very roughly, it consists of a decomposition $M=A_1\cup\dots\cup A_k$ of $M$ into $n$\nb-dimensional submanifolds $A_i$ of the form $P_i\times B_i$. Over two-fold intersections $A_i\cap A_j$, these product decompositions are compatible in the sense that $A_i\cap A_j$ has the form $P_i\times P_j\times B_{ij}$ refining the product decompositions of $A_i$ and $A_j$ over the intersection $A_i\cap A_j$, and so forth for higher intersections. 

We remark that given a list of manifolds $\sP$, there is the Baas-Sullivan theory which considers bordism groups of manifolds with singularities which are built inductively from manifolds on the list $\sP$. Removing a small open neighborhood of the singularity from such a manifold with Baas-Sullivan type singularity, leads to a smooth manifold whose boundary has the structure described above. 

For any collection $\sP$, the bordism classes of $\sP$\nb-manifolds with maps to a topological space $X$ give a homology theory $\sP_*(X)$ \cite[Prop.\ 2.11]{Fu2013}. Moreover, the homology theories $\wh\MSpin_*(X)$ and $\wh\MSO_*(X)$ are given by specific collections $\sP^\spin$, $\sP^\SO$ of spin (resp.\ oriented) manifolds \cite[Prop.\ 2.12 and Prop.\ 2.11]{Fu2013}:
\[
 \wh \MSpin_*(X)\cong \sP^\spin_*(X)
 \qquad
 \wh \MSO_*(X)\cong \sP^\SO_*(X).
 \]
 The map 
$
 i^G_*\colon \wh \MG_*(X)\cong \sP^G_*(X)\ra \MG_*(X)=\Omega_*^G(X)
$
 for $G=\SO,\spin$ has the simple geometric interpretation of forgetting the additional structure on the smooth spin/oriented $\sP^G$\nb-manifolds representing the elements of $\sP^G_*(X)$. Moreover, a smooth closed manifolds with a $\sP^G$\nb-structure always carries a \pscm, provided all manifolds on the list $\sP^G$ do \cite[Thm.\ 3.1]{Fu2013}. This shows that the image of $i_*^G$ is contained in $\Omega_*^{G,+}(X)$ as claimed, provided the collection of manifolds $\sP^G$ can be chosen to consist of manifolds which carry \pscm s. 
 
 \medskip
 
We recall from \eqref{eq:family} that 
\[
\Omega_*^\spin\otimes\Z[\tfrac 12]=\Z[\tfrac 12][[K],[M^8],[M^{12}],\dots],
\]
where $K$ is the Kummer surface and $M^{4i}$ is a spin $4i$\nb-manifold which is the total space of an $\HP^2$\nb-bundle and  hence carries a \pscm. F\"uhring shows that one can choose $\sP^\spin=\{M^8,M^{12},\dots\}$ \cite[Prop.\ 2.12]{Fu2013}. He furthermore shows that $\sP^\SO$ can be chosen to be any collection of oriented manifolds whose bordism classes are generators of the polynomial ring $\Omega_*^\SO\otimes\Z[\tfrac 12]$ \cite[Prop.\ 2.11]{Fu2013}. For example, we could  choose $\sP^\SO=\{K,M^8,M^{12},\dots\}$, since the natural map $\Omega^\spin_*\to \Omega^\SO_*$ is an isomorphism after inverting $2$, but this is not a good idea, since the Kummer surface $K$ does not carry a \pscm\ (due to $\wh A(K)\ne 0$). However, we can replace $K$ by the complex projective plane $\CP^2$, and $\sP^\SO=\{\CP^2,M^8,M^{12},\dots\}$ is a family of oriented manifolds which carry \pscm s, and whose bordism classes are polynomial generators of $\Omega_*^\SO\otimes\Z[\frac 12]$.
\end{proof}

%%%%%%%%%%%%%%%%%%%%%%%%%%%%%%%%%%%%%%%%%%%%%
\subsection{Positive scalar curvature metrics on manifolds with non-trivial fundamental groups}\label{ssec:nsconn}

In this section we illustrate how Theorem \ref{thm:ko} is used to prove the GLR Conjecture for some fundamental groups $\pi$ by outlining the proof for finite groups with periodic cohomology \cite{BGS1997}. Finally we discuss Schick's counter example to the GLR conjecture \cite{Sch1998}.

\medskip

\noindent{\bf The GLR Conjecture for groups  $\pi$ with periodic cohomology \cite{BGS1997}.} 
%\label{num:BGS}\end{num}
To prove the GLR Conjecture for a finite group $\pi$, it suffices by \cite[Prop.\ 1.5]{KwSch1990} to prove it for the $p$\nb-Sylow subgroups of $\pi$. If $\pi$ has periodic cohomology, these are either cyclic or quaternion (or rather, generalized quaternion groups,  whose order is some power of $2$). For odd order cyclic groups the GLR Conjecture was proved in \cite{Ro1986III} (for prime order) and for the general case in \cite[Thm.\ 1.8]{KwSch1990}. Here we outline the argument in \cite{BGS1997} that proves the GLR Conjecture for cyclic groups $C_\ell$ and quaternionic groups $Q_\ell$ of order $\ell=2^k$. 

To prove the GLR Conjecture for a group $\pi$, it suffices by Theorem \ref{thm:ko} to show that  the inclusion 
\[
\ko_n^+(B\pi)\ \subset\ \ker(\ko_n(B\pi)\overset{A\circ p}\ra \KO_n(B\pi))=:\ker_n(A\circ p)
\]
is an equality. 
Since the GLR Conjecture is true for the trivial group, it suffices to show $\wt\ker_n(B\pi)\subset \wt \ko_n^+(B\pi)$ (given by passing to the kernel of the map $\ko_n(B\pi)\to \ko_n$). The strategy used  in \cite{BGS1997} to prove this for $\pi=C_\ell$ and $\pi=Q_\ell$, $\ell=2^k$, is to consider lens spaces and lens space bundles over $S^2$ (in the case $\pi=C_\ell$) and lens spaces and quotients of free actions of $Q_\ell$ on spheres of dimension $\equiv 3\mod 4$ (for $\pi=Q_\ell$). Let $\cM_*(B\pi)\subset\wt\Omega_*^\spin(B\pi)$ be the $\Omega_*^\spin$\nb-submodule generated by these manifolds and their natural maps to $B\pi$, and consider its image under the natural transformation $U^\spin_*\colon \wt\Omega_*^\spin(B\pi)\to\wt\ko_*(B\pi)$. Since the lens spaces and the quaternionic space forms carry \pscm s, there is the following chain of inclusions:
\[
U_*^\spin(\cM_n(B\pi))\ \subseteq\ \wt \ko_n^+(B\pi)\ \subseteq\ \wt\ker_n(A\circ p).
\]
Proving the equality $U_*^\spin(\cM_n(B\pi))=\wt\ker_n(A\circ p)$ (this is Theorem 2.3 in \cite{BGS1997}) then implies the desired equality $\wt \ko_n^+(B\pi)=\wt\ker_n(A\circ p)$. Due to the multiplicative structure (given by multiplication by $\eta\in \ko_1$), it suffices to prove $U_*^\spin(\cM_n(B\pi))=\wt\ker_n(A\circ p)$ for $n\equiv 3\mod 4$ for $\pi=Q_\ell$, and for $n$ odd for $\pi=C_\ell$. For these $n$, it is shown that  the order of the finite group $U_*^\spin(\cM_n(B\pi))$ is greater or equal to the order of $\wt\ker_n(A\circ p)$. An upper bound for the order of $\wt\ker_n(A\circ p)$ is obtained by using the Atiyah-Hirzebruch spectral sequence converging to $\wt \ko_*(B\pi)$ to obtain an upper bound on $\wt \ko_n(B\pi)$, in conjunction with information about the assembly map $A\colon \KO_*(B\pi)\to \KO_*(\R\pi)$ for finite $2$\nb-groups $\pi$ (which is obtained by dualizing the Atiyah-Segal results concerning $\KO^*(B\pi)$). A lower bound for $U_*^\spin(\cM_n(B\pi))$ is obtained by calculating the eta-invariants of twisted Dirac operators on the lens spaces resp.\ quaternionic space forms generating $\cM_*(B\pi)$. The  eta-invariant associated to an irreducible representation of $\pi$ provide homomorphisms from $\wt {\KO}_n(B\pi)$ to $\R/\Z$ resp.\ $\R/2\Z$ (depending on whether the representation of complex, real or quaternionic type). Using all irreducible representations, one obtains a homomorphism $\check \eta$ from $\wt\ko_n(B\pi)$ to a sum of copies of $\R/\Z$ and $\R/2\Z$. An interesting byproduct of these calculations is that the homomorphism
\[
\begin{tikzcd}
\wt \ko_n(B\pi)\ar[r,"p"]
&\wt {\KO}_n(B\pi)\ar[r,"{\check \eta}"]
&\R/\Z\oplus\dots\oplus \R/\Z\oplus\R/2\Z\oplus\dots\oplus \R/2\Z
\end{tikzcd}
\]
is injective if $\pi=C_\ell$ and $n$ is odd, or if $\pi=Q_\ell$ and $n\equiv 3\mod 4$ \cite{BGS1997}.

\medskip

\noindent{\bf Schick's counter example to the GLR conjecture \cite{Sch1998}.}
%\end{num}  \label{num:Schick}
Let $\rho\colon \Z^5\to \Z^4\times\Z/3=:\pi$ be the product of the identity map on the first four components, and the projection map $\Z\to \Z/3$ on the last. Then the map
$B\rho\colon T^5=B\Z^5\overset{B\rho}\ra B\pi$ represents an element of the spin bordism $\Omega_5^\spin(B\pi)$ (equip each $S^1$ factor of $T^5$ with the bounding spin structure). This is an element of order $3$, and hence its image in $\KO_5(C^*\pi)$ is trivial, since the torsion of $\KO_*(C^*\pi)$ is only $2$\nb-torsion \cite[Prop.\ 2.1]{Sch1998} (this is the case for the product of a free abelian group with {\em any} finite group).  
A $1$-surgery on $T^5$ on an embedded  $1$\nb-sphere representing the kernel of $B\rho_*\colon \pi_1(T^5)\to \pi_1(B\pi)$ produces a spin manifold $M$ with fundamental group $\pi$ such that $[M,u]=[T^5,B\rho]\in \Omega_5^\Spin(B\pi)$. Then the index obstruction $\alpha(M,u)\in \KO_5(C^*\pi)$ is the image of $[T^5,B\rho]$ in $\KO_5(C^*\pi)$ and hence zero. 

To show that $M$ does not carry a \pscm, we use Corollary \ref{cor:min_hyp} (which exploits the obstructions coming from stable minimal hypersurfaces). Let $\alpha_i\in H^1(B\pi)$ be pullback of the generator of $H^1(S^1)$ via the projection of $B\pi\to S^1$ to the $i$\nb-th factor, $i=1,\dots,4$. Assume that $M$ does carry a \pscm, and hence $u_*[M]^\SO=B\rho_*[T^5]$ belongs to the positive subgroup $H_5^+(B\pi)\subseteq H_5(B\pi)$. Applying  Cor.\ \ref{cor:min_hyp} three times, then  $\alpha_1\cap (\alpha_2\cap (\alpha_3\cap B\rho_*[T^5]))$ belongs to $H_2^+(B\rho)$. This is the desired contradiction, since this iterated cap product is non-trivial, but by the Gauss-Bonnet Theorem the only closed $2$\nb-manifold that carries a \pscm\ is the $2$\nb-sphere, and any homology class in $B\pi$ represented by a map $f\colon S^2\to B\pi$ is trivial.

%%%%%%%%%%%%%%%%%%%%%%%%%%%
\section{Some open questions}
\begin{enumerate}[leftmargin=0cm]
\item The focus of this survey is the question \ref{quest:main} which closed manifolds $M$ carry \pscm s. If $M$ is a manifold with boundary $\p M$, and $h$ is a \pscm\ on $\p M$, one can ask the corresponding ``relative'' question: does $h$ extend to a \pscm\ $g$ on $M$? We require that $g$ is a {\em product metric near the boundary}, a tacit assumption we will make for metrics on manifolds with boundary. If $M$ is a spin $n$\nb-manifold, the Clifford linear Dirac operator $D^M_{\Cl_n}$ on $M$ with respect to the Atiyah-Singer-Patodi boundary conditions has a Clifford index $\alpha(M,h)\in \KO_n$. The operator $D^M_{\Cl_n}$ is constructed using {\em any} metric $g$ on $M$ extending $h$. The Clifford index $\alpha(M,h)$ is independent of the choice of $g$, and depends only on the concordance class of the \pscm\ $h$ on $\p M$ (two \pscm s $h,h'$ on a closed manifold $N$ are {\em concordant} if there is a \pscm\ on $N\times [0,1]$ which restricts to $h$ resp.\ $h'$ on the boundary). Moreover, if the scalar curvature of $g$ is positive, then, as in the case of closed manifolds, the Dirac operator $D^M_{\Cl_n}$ is invertible, and hence $\alpha(M,h)=0$. This shows that the index invariant $\alpha(M,h)$ is an obstruction to extending $h$ to a \pscm\ $g$ on $M$. According to Theorem \ref{thm:stolz} this is the {\em only} obstruction if $M$ is a simply connected compact spin manifold of dimension $n\ge 5$ without boundary. So it is very natural to ask: 

\begin{question}  Let $M$ be a simply connected compact spin manifold of dimension $n\ge 5$ and $h$ a \pscm\ on $\p M$. Does $h$ extend to a \pscm\ on $M$ \Iff the index obstruction $\alpha(M,h)\in \KO_n$ vanishes?
\end{question}

As discussed for closed manifolds, the index invariant $\alpha(M,h)$ has a refinement which lives in $\KO_n(C^*\pi)$ where $\pi$ is the fundamental group of $M$. The answer to the analogous question is ``no'' in general, since already for closed manifolds, there are the additional obstructions coming from the stable minimal hypersurface method (see  \S  \ref{ssec:min_hyp}).

The above question is intimately related to the classification of  \pscm s up to concordance. Let $R_n$ be the bordism group of pairs $(M,h)$ consisting of a spin $n$\nb-manifold $M$ and a \pscm\ $h$ on $\p M$.

\begin{thm} Let $M$ be a smooth compact simply connected spin manifold of dimension $n\ge 5$.
\begin{enumerate}[label=\normalfont(\roman*),topsep=2pt,itemsep=-2pt]
\item {\bf Hajduk \cite{Haj1991}:} A \pscm\ $h$ on $\p M$ extends to a \pscm\ on $M$ \Iff $[M,h]\in R_n$ vanishes.
\item {\bf \cite[Thm.\ 3.9]{St1995}, \cite[Thm.\ 1.1]{St1996}:} If $h$ extends to a \pscm\ on $M$, then the group $R_{n+1}$ acts freely and transitively on the set of concordance classes of such metrics. 
\end{enumerate}
\end{thm}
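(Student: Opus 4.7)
Part (i) has one tautological direction: if $h$ extends to a \pscm\ $g$ on $M$, then $(M,h)$ is null\nb-bordant in $R_n$ via $W=M\times[0,1]$ with cap $Z=M\times\{1\}$ carrying $g$. For the converse, unwinding $[M,h]=0$ produces a compact spin $(n+1)$\nb-manifold $W$ with $\partial W=M\cup_{\partial M}Z$, where $Z$ is an $n$\nb-manifold with $\partial Z=\partial M$ carrying a \pscm\ $G$ extending $h$. I would view $W$ as a cobordism rel $\partial$ from $Z$ to $M$ and adapt the strategy of the Bordism Theorem~\ref{thm:bordism2}: perform interior surgeries on $W$ (fixing $\partial W$) to make the inclusion $M\hookrightarrow W$ a $2$\nb-equivalence. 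Since $M$ is simply connected, $W$ is spin, and $\dim W=n+1\ge 6$, all needed $\pi_0,\pi_1,\pi_2$-killing surgeries can be performed on embedded spheres of trivial normal bundle. Then choose a Morse function $f\colon W\to[0,1]$ with $f^{-1}(0)=Z$, $f^{-1}(1)=M$, product near $\partial M\times[0,1]\subset\partial W$, and with no critical point of index $\ge n-1$; every critical level then corresponds to a surgery of codimension $\ge 3$. A relative version of the Surgery Theorem~\ref{thm:surgery}, applied successively, extends $G$ across $W$ while keeping the product metric $h+dt^2$ along $\partial M\times[0,1]$; its restriction to $M$ is the desired extension of $h$.

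Part (ii) I would deduce from Part (i) via a \emph{difference} construction. For two \pscm\ extensions $g_0,g_1$ of $h$ on $M$, define
\[
d(g_0,g_1):=[M\times[0,1],\ g_0\cup(h+dt^2)\cup g_1]\in R_{n+1},
\]
the boundary metric being $g_0$ at $t=0$, $g_1$ at $t=1$, and $h+dt^2$ on $\partial M\times[0,1]$ (corners smoothed). Part (i) applied to the simply connected compact spin manifold $M\times[0,1]$ of dimension $n+1\ge 6$ identifies vanishing of this class with extendability of the boundary metric, i.e., with concordance of $g_0$ and $g_1$. Stacking cylinders yields the cocycle identity $d(g_0,g_1)+d(g_1,g_2)=d(g_0,g_2)$, which implies that $d(g_0,\cdot)$ descends to an injection of concordance classes of extensions into $R_{n+1}$; this is the \emph{freeness} of the would\nb-be action. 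For \emph{transitivity} (equivalently, surjectivity of $d(g_0,\cdot)$), given $[V,k]\in R_{n+1}$ I would construct $g_1$ by a geometric gluing: attach a representative $V$ to $(M\times[0,1],\,g_0\cup(h+dt^2)\cup g_0)$ via a boundary\nb-connected\nb-sum/handle\nb-attachment inside the interior, arranged so that the resulting $(n+1)$\nb-manifold is diffeomorphic rel $\partial M\times[0,1]\cup M\times\{0\}$ to $M\times[0,1]$, then invoke Part (i) to propagate the new psc boundary data across the interior and read off $g_1$ on the top. Bordism bookkeeping in $R_{n+1}$ then identifies $d(g_0,g_1)$ with $[V,k]$, and the resulting prescription $g_0\cdot[V,k]:=g_1$ is the free transitive action.

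The central technical obstacle in both parts is the \emph{relative} Surgery Theorem for \pscm s underlying Part (i): performing psc surgery on a manifold with corners while preserving the product structure $h+dt^2$ along the ``vertical'' boundary stratum $\partial M\times[0,1]$ and modifying the \pscm\ on the ``horizontal'' stratum. The Gromov--Lawson/Schoen--Yau construction adapts to this setting, but the bookkeeping of metrics near corners and their required product structures requires considerable care. Once this relative surgery technology is in hand, the Morse-theoretic handle decomposition of Part (i) proceeds as in Theorem~\ref{thm:bordism2}, and Part (ii) reduces formally to Part (i) via the difference construction.
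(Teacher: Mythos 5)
First, a point of comparison: the survey itself does not prove this theorem — both parts are quoted from the literature (Hajduk for (i), Stolz's ICM address and unpublished preprint for (ii)) — so there is no internal proof to measure you against, and I am judging your sketch on its own terms. For part (i) your architecture is the standard and correct one: unwind $[M,h]=0$ to a spin bordism $W$ rel $\p M$ from a psc ``cap'' $Z$ to $M$, use interior surgeries (available because $M$ is simply connected, $W$ is spin and $\dim W=n+1\ge 6$) to make $M\hookrightarrow W$ a $2$\nb-equivalence, trade handles so that all critical indices are $\le n-2$, and push the metric through level by level. The one genuine black box is the relative surgery theorem — psc surgery that fixes the product metric $h+dt^2$ on the untouched boundary stratum and yields a product metric at the new level — which you correctly flag; that is precisely the content of Hajduk's paper and of Gajer's extension of Gromov--Lawson, and essentially all of the analysis lives there. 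As a sketch, part (i) is fine.

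Part (ii) contains a real gap. Freeness via the difference class $d(g_0,g_1)=[M\times I,\ g_0\cup(h+dt^2)\cup g_1]$ together with part (i) applied to the simply connected spin $(n+1)$\nb-manifold $M\times I$ is correct, and the cocycle identity then makes $u\cdot[g_0]:=d(g_0,\cdot)^{-1}(u)$ a well-defined free action \emph{once} $d(g_0,\cdot)$ is known to be surjective. But that surjectivity — transitivity — amounts to showing that \emph{every} class in $R_{n+1}$ is represented by a cylinder on $M$ with prescribed bottom and sides, and your mechanism for this does not work as stated: a boundary connected sum of an arbitrary representative $(V,k)$ with $M\times[0,1]$ replaces the top face by a connected sum with $\p V$ and the total space by a manifold that is not a cylinder, and no choice of gluing locus will ``arrange'' it to be diffeomorphic rel $M\times\{0\}\cup\p M\times[0,1]$ to $M\times[0,1]$. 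What is actually required is a second run of the surgery/handle-trading machinery carried out \emph{inside the bordism relation defining} $R_{n+1}$: modify $(V,k)\sqcup(M\times I,\,g_0\cup(h+dt^2)\cup g_0)$ by admissible $R_{n+1}$\nb-bordisms (interior surgeries on $V$, a psc bordism of the boundary data, and a relative handle decomposition rel the bottom and sides) until the underlying manifold is standardized to $M\times I$ and the unconstrained boundary face to $M$. This standardization is the hard step of (ii); it is the analogue for $R_{n+1}$ of the Bordism Theorem's improvement of $W$, and it is missing from your argument rather than merely deferred.
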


This is just the simplest instance of a much more general result. Without assuming  simply connectivity for $M$, the bordism group $R_n$ has to be replaced by the bordism group $R_n(\pi)$, $\pi=\pi_1(M)$, where all manifolds come equipped with maps to the classifying space $B\pi$ (\cite[Thms.\ 3.8 \& 3.9]{St1995}); without the spin condition, it needs to be replaced by $R_n(\gamma(M))$ where $\gamma(M)$ is the {\em fermionic fundamental group} of $M$ (which depends only on the fundamental group  and the first two Stiefel-Whitney classes of $M$ up to isomorphism).

Associating to a pair $(M,h)$ the index obstruction $\alpha(M,h)\in \KO_n$ defines a homomorphism $\alpha\colon R_n\ra \KO_n$. It is easy to show that $\alpha$ is surjective, and a  positive answer to the question above would imply that $\alpha$ is an isomorphism; in particular, $\KO_{n+1}$ would act freely and transitively on the set of concordance classes of \pscm s on any compact manifold $M$ of  dimension $n\ge 5$ (and extending a given \pscm\ $h$ on $\p M$ if $\p M\ne \emptyset$), provided there exist such metrics. 

Alas, the question above appears out of reach of current methods. For example, consider a simply connected $5$\nb-manifold $M$. If the boundary is empty, the answer to the question whether $M$ carries a \pscm\ depends only on the bordism class of $M$ in $\Omega^\Spin_5$ (if $M$ is spin) or $\Omega^\SO_5$ (if $M$ does not admit a spin structure). Since both bordism groups are trivial, in either case $M$ carries a \pscm. 

If $\p M$ is non-empty, and $h$ is a \pscm\ on $\p M$, there is no known obstruction to extending $h$ to a \pscm on $M$, but it seems that some more direct geometric method that uses the metric $h$ in an essential way, is necessary. 

\item Let $M$ be a closed connected spin manifold $M$ of dimension $n\ge 5$ with fundamental group $\pi$. According to the Bordism Theorem \ref{thm:bordism} the answer to the question whether $M$ carries a \pscm\ depends only on the class it represents in the spin bordism group $\Omega^\spin_n(B\pi)$. By Theorem \ref{thm:ko} in fact it depends only on the image of this class under the map
\[
U_*^\spin\colon \Omega_n^\spin(B\pi)\ra \ko_n(B\pi).
\]
With the weaker hypothesis that the universal covering $\wt M$ is spin, the manifold $M$ represents an element in the {\em twisted spin bordism group} $\Omega_n^{\spin,\tau}(B\pi)$, where the twist $\tau$ depends on the first two Stiefel-Whitney classes of $M$ (the condition that $\wt M$ is spin guarantees that $w_2(M)\in H^2(M;\Z/2)$ comes from a unique class in $H^2(B\pi;\Z/2)$). There is a twisted version of connective real $K$\nb-theory, and a homomorphism 
\[
U_*^{\spin,\tau}\colon \Omega_n^{\spin,\tau}(B\pi)\ra \ko_n^\tau(B\pi),
\]
constructed in \cite{HJ2020} by a mixture of homotopy theoretic and operator theoretic methods. As in the non-twisted case, the hope is that it suffices to look at the associated element in $\ko_n^\tau(B\pi)$ to decide whether $M$ admits a \pscm. This boils down to a positive answer to the following quesiton.

\begin{question}  Is the kernel of $U_*^{\spin,\tau}$ representable by 
manifolds that carry \pscm s?
\end{question}

The paper \cite{HJ2020} lays the foundation for possible future homotopy theoretic arguments addressing this question. 

\item As mentioned in section \ref{ssec:min_hyp}, the only known obstructions to \pscm s on a closed manifold with non-spin universal covering comes from the stable minimal hypersurface method. This leads to the restrictions on the subgroup $H_n^+(X)\subset H_n(X)$ of  homology classes representable by closed oriented manifolds with \pscm s, expressed by Cor.\ \ref{cor:min_hyp}. 

\begin{question} Does the statement of Corollary \ref{cor:min_hyp} hold without the dimension restriction $n\le 7$?
\end{question}

The papers \cite{Lo2006} and \cite{SY2017} deal with ways to extend the stable minimal hypersurface method to higher dimension. The technical challenge is to develop techniques to deal with the singularities of stable minimal hypersurfaces.

\item Let $M$ be a closed connected manifold of dimension $n\ge 5$ with finite fundamental group $\pi$ whose universal covering $\wt M$ is non-spin. 
According to  Conjecture \ref{conj:nonspin_cover}, such a manifold carries a \pscm, and by the induction result \cite[Prop.\ 1.5]{KwSch1990}, it suffices to show this for finite $p$\nb-groups. As mentioned in the paragraph following Conjecture \ref{conj:nonspin_cover}, a lot is known if $\pi$ is an {\em abelian} $p$\nb-group. The essential open case is the 
the following for odd $p$.

\begin{question}  Let $\pi$ be the elementary abelian group $(\Z/p)^n$, let $\rho\colon \Z^n\to (\Z/p)^n$ be the projection map, and let  $B\rho\colon T^n=B\Z^n\to B\pi$ be the induced map of classifying spaces. Does $B\rho_*[T^n]^\SO\in H_n(B\pi)$ belong to $H_n^+(B\pi)$ for $n\ge 5$? (here  $T^n=S^1\times\dots\times S^1$ is the $n$\nb-dimensional torus, and $[T^n]^\SO\in H_n(T^n)$ is its fundamental class).
\end{question}

Joachim has shown that the answer for $p=2$ is affirmative \cite{Jo2004}, but his approach does not work for $p$ odd. 

The case  non-abelian finite $p$\nb-groups $\pi$ does not seem to have been studied much. The fact that the classifying space $B\pi$ can be ``built'' from the classifying spaces of its elementary abelian $p$\nb-groups  \cite[Thm.\ 1.4.\ and 1.12]{Dw1997}, suggests the following vague question.

\begin{question}  Can $H_*^+(B\pi)$ for a $p$\nb-group $\pi$ be determined in terms of $H_*^+$ for its $p$\nb-Sylow subgroups?
\end{question}

\end{enumerate}

%%%%%%%%%%%%%%%%%%%%%%%%%%%%%%%%%%%%%%%%


\begin{thebibliography}{blah}

\bibitem[ABP1967]{ABP1967} Anderson, D. W.; Brown, E. H., Jr.; Peterson, F. P., \textit{The structure of the Spin cobordism ring}. Ann. of Math. (2) 86 (1967), 271–298

\bibitem[ABS1963]{ABS1963}  Atiyah, M. F.; Bott, R.; Shapiro, A, \textit{Clifford modules}, Topology 3 (1964), no. suppl, suppl. 1, 3--38.

\bibitem[BGS1997]{BGS1997}  Botvinnik, Boris; Gilkey, Peter; Stolz, Stephan, \textit{The Gromov-­Lawson­-Rosenberg conjecture for groups with periodic cohomology}, J. Differential Geom. 46 (1997), no. 3, 374 --
405.

\bibitem[BR2002]{BR2002} Botvinnik, Boris; Rosenberg, Jonathan \textit{The Yamabe invariant for non-simply connected manifolds}, J. Differential Geom. 62 (2002), no. 2, 175--208

\bibitem[BR2005]{BR2005} Botvinnik, Boris; Rosenberg, Jonathan,
\textit{Positive scalar curvature for manifolds with elementary abelian fundamental group}, Proc. Amer. Math. Soc. 133 (2005), no. 2, 545--556.

\bibitem[BR2021]{BR2021} Botvinnik, Boris; Rosenberg, Jonathan, \textit{Positive scalar curvature on Pin$^\pm$- and Spin$^c$-manifolds and manifolds with singularities}, in this volume

\bibitem[DL2013]{DL2013} Davis, James F.; Lück, Wolfgang \textit{The topological K-theory of certain crystallographic groups},  J. Noncommut. Geom. 7 (2013), no. 2, 373--431.

\bibitem[DP2003]{DP2003} Davis, James F. ; Pearson, Kimberly,
\textit{The Gromov-Lawson-Rosenberg conjecture for cocompact Fuchsian groups}, Proc. Amer. Math. Soc. 131 (2003), no. 11, 3571--3578


\bibitem[Dw1997]{Dw1997} Dwyer, W. G., \textit{Homology decompositions for classifying spaces of finite groups}, Topology 36 (1997), no. 4, 783--804

\bibitem[DSS2003]{DSS2003} Dwyer, William; Schick, Thomas; Stolz, Stephan
, \textit{Remarks on a conjecture of Gromov and Lawson}, High-dimensional manifold topology, 159--176, World Sci. Publ., River Edge, NJ, 2003.

\bibitem[Fu2013]{Fu2013} F\"uhring, Sven,
\textit{A smooth variation of Baas-Sullivan theory and positive scalar curvature},  Math. Z. 274 (2013), no. 3-4, 1029--1046

\bibitem[GL1980]{GL1980} Gromov, Mikhael; Lawson, H. Blaine, Jr. \textit{The classification of simply connected manifolds of positive scalar curvature}, Ann. of Math. (2) 111 (1980), no. 3, 423–434

\bibitem[GL1983]{GL1983} Gromov, Mikhael; Lawson, H. Blaine, Jr. \textit{Positive scalar curvature and the Dirac operator on complete Riemannian manifolds}, Inst. Hautes \'Etudes Sci. Publ. Math. No. 58 (1983), 83--196 (1984)

\bibitem[Haj1991]{Haj1991} Hajduk, Bogusław, \textit{On the obstruction group to existence of Riemannian metrics of positive scalar curvature}, Global differential geometry and global analysis (Berlin, 1990), 62--72, Lecture Notes in Math., 1481, Springer, Berlin, 1991.

\bibitem[Ha2016]{Ha2016} Hanke, Bernhard, \textit{Bordism of elementary abelian groups via inessential Brown-Peterson homology}, J. Topol. 9 (2016), no. 3, 725--746.

\bibitem[Ha2019]{Ha2019} Hanke, Bernhard, \textit{Positive scalar curvature on manifolds with odd oder abelian fundamental groups}, arXiv:1908.00944v3, to appear in Geometry \& Topology

\bibitem[HJ2020]{HJ2020} Hebestreit, Fabian; Joachim, Michael
\textit{Twisted spin cobordism and positive scalar curvature}, 
J. Topol. 13 (2020), no. 1, 1--58

\bibitem[Hi1974]{Hi1974}  Hitchin, Nigel, \textit{Harmonic spinors}, Advances in Math. 14 (1974), 1--55

\bibitem[Jo2004]{Jo2004} Joachim, Michael,
\textit{Toral classes and the Gromov-Lawson-Rosenberg conjecture for elementary abelian 2-groups}, 
Arch. Math. (Basel) 83 (2004), no. 5, 461--466.

\bibitem[JS2000]{JS2000} Joachim, Michael; Schick, Thomas 
\textit{Positive and negative results concerning the Gromov-Lawson-Rosenberg conjecture}, Geometry and topology: Aarhus (1998), 213--226,
Contemp. Math., 258, Amer. Math. Soc., Providence, RI, 2000.

\bibitem[Ju]{Ju} Jung, Rainer, unpublished

\bibitem[KW1975]{KW1975} Kazdan, Jerry L.; Warner, F. W.,  \textit{Existence and conformal deformation of metrics with prescribed Gaussian and scalar curvatures},  Ann. of Math. (2) 101 (1975), 317--331.

\bibitem[KrSt1993]{KrSt1993} Kreck, Matthias and Stolz, Stephan, \textit{$\HP^2$-bundles and elliptic homology}, Acta Math. 171 (1993), 231--261

\bibitem[KwSch1990]{KwSch1990}  Kwasik, Slawomir; Schultz, Reinhard, \textit{Positive scalar curvature and periodic fundamental groups}, Comment. Math. Helv. 65 (1990), no. 2, 271--286

\bibitem[LM1989]{LM1989} Lawson, H. Blaine, Jr.; Michelsohn, Marie-Louise \textit{Spin geometry},  Princeton Mathematical Series, 38. Princeton University Press, Princeton, NJ, 1989. xii+427 pp.

\bibitem[Li1963]{Li1963} Lichnerowicz, André, \textit{Spineurs harmoniques}, C. R. Acad. Sci. Paris 257 (1963), 7--9.

\bibitem[Lo2006]{Lo2006} Lohkamp, Joachim \textit{Positive scalar curvature in dim$\ge 8$}. C. R. Math. Acad. Sci. Paris 343 (2006), no. 9, 585--588.

\bibitem[Miy1985]{Miy1985} Miyazaki, Tetsuro, \textit{Simply connected spin manifolds with positive scalar curvature}. Proc. Amer. Math. Soc. 93 (1985), no. 4, 730–734

\bibitem[Ro1983]{Ro1983} Rosenberg, Jonathan, \textit{$C^*$-algebras, positive scalar curvature, and the Novikov conjecture}, Inst. Hautes \' Etudes Sci. Publ. Math. No. 58 (1983), 197--212 (1984)

\bibitem[Ro1986II]{Ro1986II} Rosenberg, J., \textit{$C^*$-algebras, positive scalar curvature, and the Novikov conjecture. II},  Geometric methods in operator algebras (Kyoto, 1983), 341–374, Pitman Res. Notes Math. Ser., 123, Longman Sci. Tech., Harlow, 1986.

\bibitem[Ro1986III]{Ro1986III}  Rosenberg, Jonathan, \textit{$C^*$-algebras, positive scalar curvature, and the Novikov conjecture. III}, Topology 25 (1986), no. 3, 319–336

\bibitem[Ro1991]{Ro1991} Rosenberg, Jonathan, \textit{The KO-assembly map and positive scalar curvature},  Algebraic topology Poznań 1989, 170--182,
Lecture Notes in Math., 1474, Springer, Berlin, 1991

\bibitem[RS1994]{RS1994}, Rosenberg, Jonathan; Stolz, Stephan, \textit{Manifolds of positive scalar curvature}, Algebraic topology and its applications, 241–267, Math. Sci. Res. Inst. Publ., 27, Springer, New York, 1994

\bibitem[RS1995]{RSt1995} Rosenberg, Jonathan; Stolz, Stephan, \textit{A "stable'' version of the Gromov­ Lawson conjecture}, The Čech centennial (Boston, MA, 1993), 405–418, Contemp. Math., 181, Amer. Math. Soc., Providence, RI, 1995.

\bibitem[RS2001]{RS2001} Rosenberg, Jonathan; Stolz, Stephan, \textit{Metrics of positive scalar curvature and connections with surgery}, Surveys on surgery theory, Vol. 2, 353–386, Ann. of Math. Stud., 149,
Princeton Univ. Press, Princeton, NJ, 2001

\bibitem[Sch1998]{Sch1998}  Schick, Thomas,  \textit{A counterexample to the (unstable) Gromov-Lawson-Rosenberg conjecture}, Topology 37 (1998), no. 6, 1165--1168

\bibitem[St1992]{St1992} Stolz, Stephan, \textit{Simply connected manifolds of positive scalar curvature}, Ann. of Math. (2) 136 (1992), no. 3, 511--540.

\bibitem[St1994]{St1994}  Stolz, Stephan, \textit{Splitting certain MSpin-module spectra},  Topology 33 (1994), no. 1, 159–180.

\bibitem[St1995]{St1995} Stolz, Stephan, \textit{Positive scalar curvature metrics --existence and classification questions}, Proceedings of the International Congress of Mathematicians, Vol. 1, 2 (Zürich, 1994), 625–636, Birkhäuser, Basel, 1995.

\bibitem[St1996]{St1996} Stolz, Stephan, \textit{Concordance classes of \pscm s}, preprint 1996, unpublished, available at \url{https://www3.nd.edu/~stolz/preprint.html}

\bibitem[St2001]{St2001} Stolz, Stephan,  \textit{Manifolds of positive scalar curvature}, Topology of high­ dimensional manifolds, No. 1, 2 (Trieste, 2001), 661--709, ICTP Lect. Notes, 9, Abdus Salam Int. Cent. Theoret. Phys., Trieste, 2002.

\bibitem[SY1979a]{SY1979a}  Schoen, R.; Yau, S. T.,  \textit{On the structure of manifolds with positive scalar curvature}, Manuscripta Math. 28 (1979), no. 1-3, 159--183

\bibitem[SY1979b]{SY1979b} Schoen, R.; Yau, Shing Tung, \textit{Existence of incompressible minimal surfaces and the topology of three-dimensional manifolds with nonnegative scalar curvature}, Ann. of Math. (2) 110 (1979), no. 1, 127--142.

\bibitem[SY2017]{SY2017} Schoen, R.; Yau, Shing Tung, \textit{Positive scalar curvature and minimal hypersurface singularities}, arXiv:1704.0490v1

\bibitem[Wa1960]{Wa1960} Wall, C. T. C.
\textit{Determination of the cobordism ring},
Ann. of Math. (2) 72 (1960), 292–311

\end{thebibliography}
\end{document}